\documentclass[12pt]{article}
\usepackage{geometry}
\usepackage{fancyhdr}
\usepackage{titlesec}
\usepackage{lipsum}
\usepackage{graphicx}
\usepackage{setspace}
\usepackage{amsmath}
\usepackage{amssymb}
\usepackage{bm}
\usepackage{array}
\usepackage{times}
\usepackage{tabularx}
\usepackage{titletoc}

\usepackage{mathtools}
\usepackage{multirow,array}
\usepackage{graphicx}
\usepackage{subfigure}
\usepackage{epstopdf}
\usepackage{float} 
\usepackage{color, xcolor}
\usepackage{amsthm}

\usepackage{comment}
\def\theequation{\arabic{section}.\arabic{equation}}
\usepackage{amssymb,amsmath,amsfonts,latexsym} 
\usepackage{hyperref}
   
\usepackage{mathtools}
\usepackage{multirow,array}
\usepackage{graphicx}
\usepackage{subfigure}
\usepackage{epstopdf}
\usepackage{float} 
\usepackage{color, xcolor}
\usepackage{comment}
\allowdisplaybreaks

\newtheorem{theorem}{Theorem}[section]
\newtheorem{lemma}[theorem]{Lemma}
\newtheorem{corollary}[theorem]{Corollary}
\newtheorem{definition}[theorem]{Definition}

\newcommand{\caputo}[3][0]{{}_{#1}^{C}\! D^{#2}_{#3}\!}
\newcommand{\RL}[3][0]{{}_{#1}^{RL}\! D^{#2}_{#3}\!}
\newcommand{\J}[3][0]{{}_{#1}\! J^{#2}_{#3}\!}

\newcommand{\Conv}{%
  \mathop{\scalebox{1.5}{\raisebox{-0.2ex}{$\circledast$}}
  }
}

\newcommand{\R}{\mathbb{R}}
\newcommand{\C}{\mathbb{C}}
\newcommand{\Z}{\mathbb{Z}}

\newcommand{\El}{\mathcal{L}}

\begin{document}


\title{Convolution-to-sum identities for Mittag-Leffler type functions}

\author{
    William Cvetko\thanks{University of Utah. \texttt{wilcvetko@gmail.com}}
    \and
    Elena Cherkaev\thanks{Department of Mathematics, University of Utah, 
    155 South 1400 East, JWB 233, Salt Lake City, UT. \texttt{elena@math.utah.edu}}
}



\maketitle

\begin{abstract}

 Product-to-sum identities for trigonometric functions play a fundamental role in function theory and numerous applications. In this spirit, we present
convolution-to-sum identities for Mittag-Leffler type functions. 
Using a Laplace domain analysis of fractional operators, we identify a family of Mittag-Leffler type functions that encapsulates the eigenfunctions of Riemann-Liouville and Caputo fractional derivatives. We work with two closely-related parameterizations of this class, $R_{\alpha,v}$ and $P_{\alpha,w}$.
The convolution of two such functions can be expressed as a series of them. Moreover, if the functions share the same order $\alpha$, the convolution can be reduced to a sum of two $P/R$ functions through a partial-fraction decomposition in the Laplace domain. Furthermore, $R$ and $P$ functions satisfy a generalization of Euler's identity, which expands the scope of the previous result to convolutions of $P/R$ functions whose orders $\alpha_1,\alpha_2$ are related by a rational factor. For $\frac{\alpha_1}{\alpha_2} = \frac{n}{m}$, the resulting sum has $n+m$ terms. The foundational results and methods developed here are illustrated by their application to forced subdiffusion and to a fractionally attenuated wave equation (the Caputo-Wismer-Kelvin, or the fractional Kelvin-Voigt model).

\end{abstract} 

\smallskip
\noindent\textbf{Keywords:} fractional calculus (primary); 
Mittag-Leffler type functions; 
fractional ordinary and partial differential equations; 
convolution identities; 
Laplace transform methods; 
fractional eigenfunctions

\smallskip
\noindent\textbf{MSC 2020:} 26A33 (primary); 33E12; 34A08; 35R11; 44A10


\section{Introduction} \label{sec:1}

\setcounter{section}{1} \setcounter{equation}{0} 
Fractional differential equations incorporating the Caputo ($\caputo[]{\alpha}{}$) and Riemann-Liouville ($\RL[]{\alpha}{}$) derivatives capture complex phenomena owing to their nonlocal memory effects and interpolative nature between standard derivatives. These properties have found use across areas such as materials science, viscoelasticity, heat transfer,  and epidemiology \cite{Bagley_Torvik_1986_fractional_viscoelastic,Caputo_Mainardi_1971_Dissipation_model,Kontou_2025_polymer_viscoplastic,Lvov_2021_time_fractional_phase_field,Mozafarifard2020_time_frac_subdiffusion_thin_metal,Prasad_Kumar_Dohare_2023_Zika,Riechers2024_Cluster_dynamics_Fractional_diffusion_mettalic_glass,Voller_2018_anomalous_heat_transfer}. In context of these applications, there has also been much work on numerical methods, including existence and regularity proofs, inverse problems,  and further generalization of differential operators, for a spread of models both linear and nonlinear \cite{Borikhanov_Ruzhansky_Torebek_2023_qualitative,Ervin_2021_regularity_fractional_diffusion_advection_etc,Gal_Warma_2020_fractional_in_time_parabolic_eq_books,Li_Khan_Riaz_2024_fractional_stochastic,Kaltenbacher_Rundell_2023_inverse_problems,Li_Celik_Telyakovskiy_2024_completely_nonlocal,Li_Huang_Liu_2023_coupled_systems}.

 An important class of problems is related to the properties of the eigenfunctions of Caputo and Riemann-Liouville fractional derivatives, that share a similar form described by Mittag-Leffler functions, $E_{\alpha,\beta}(z)$ (see Definition \ref{def:mittag_leffler_2_param}); this function has been called "the queen of fractional calculus" \cite{Mainardi_2020_Mittag_Leffler_Queen_Fractional_Calc}. For an eigenvalue $\lambda\in \C$, the eigenfunctions of the Caputo and Riemann-Liouville fractional-differential operators of order $\alpha >0$ are respectively given by (\cite{Grigoletto_2018_fractional_eigenfunctions} as well as the present work)
\[ 
t^{j} E_{\alpha,j+1}(\lambda t^\alpha)  \qquad \mbox{and} \qquad 
 t^{\alpha-j-1 }E_{\alpha,\alpha-j}(\lambda t^\alpha),
\]
 for integers $j$ such that $0 \leq j < \lceil\alpha\rceil$, where $\lceil\alpha\rceil$ is the ceiling of $\alpha$. The properties of functions of this form, which largely stem from their elegant Laplace transform and their power-law decay for $\lambda<0,\alpha<1$, have been studied in \cite{Aleroev_2025_fractional_sturm_Liouville,Apelblat_2020_differentiation_ML,Gorenflo_Kilbas_Mainardi_Rogosin_2014_ML_book,Kiryakova_2000_multiindex_ML,Mainardi_2000_mittag_type_functions_in_frac_evolution,Mainardi1996_Fractional_Relaxation_oscillation,Mainardi_2020_Mittag_Leffler_Queen_Fractional_Calc,Podlubny1999}. Notable work includes the numerical evaluation of such functions in \cite{Garrappa_2015_numerical_evaluation_ML} and the asymptotic limits of them \cite{Mainardi2014_MittagLeffler_Properties}. 
 Much of the literature has focused on the case where $\alpha <1, $ or $1<\alpha<2$, typically extending models which employ first or second time derivatives. However the methods and results in this paper are applicable for all $\alpha>0$.
 In the current work, we study parametrizations of Mittag-Leffler type functions,  $P_{\alpha,w}(\lambda,t)$ and $R_{\alpha,v}(\lambda,t)$, given by 
 \[
  P_{\alpha,w}(\lambda,t) = t^{w} E_{\alpha,w+1}(\lambda t^\alpha)  \qquad \mbox{and} \qquad R_{\alpha,v}(\lambda,t) = t^{\alpha-v-1}E_{\alpha,\alpha-v}(\lambda t^\alpha),
 \]
 which, respectively, are eigenfunctions of the Caputo and Riemann-Liouville derivatives of order $\alpha$ when $w$ and $v$ are integers with $w,v\in \{0,\cdots,\lceil\alpha\rceil-1\}$. This class of functions will hereafter be referred to as '$P/R$ functions' for brevity.
Note that when $\alpha = 1$, they are exponentials, and when $\alpha = 2$, they are trigonometric or hyperbolic cosine and sine functions. For example, $P_{1,0}(a,t) =e^{at}$ and $ R_{2,1}(-\omega^2,t) = \cos(\omega t)$. The $R$ function was introduced in \cite{LorenzoHartley1999_generalized_functions_for_fractional}, subsequently explored in \cite{LorenzoHartley2000_RFunction_relationships,lorenzo2016_fractional_Trig_book},  and is referred to as the Lorenzo-Hartley function. As $R_{\alpha,j}(\lambda,t)$ gives the $j$th eigenfunction for the Riemann-Liouville function, we parallel this convention by introducing $P$ such that $P_{\alpha,j}(\lambda,t)$ is the $j$th eigenfunction for the Caputo derivative. For further justification, note that for $j=0$, $\lambda= -1$, $P_{\alpha,0}(-1,t) = E_\alpha(-t^\alpha)$, which is among the most studied forms of the Mittag-Leffler function, often denoted as $e_\alpha(t)$ (particularly in Mainardi's work, for instance  \cite{Mainardi2014_MittagLeffler_Properties}).


In the current paper, we prove convolution-to-sum identities for Mittag-Leffler type functions. These results generalize the representations available for the convolution of exponential functions. For example, the convolution of exponential functions $e^{at}*e^{bt}$ is given as $\frac{1}{(s-a)(s-b)}$ in the Laplace domain, which is represented as $\frac{1}{a-b}(\frac{1}{s-a}-\frac{1}{s-b})$ using partial fraction decomposition, leading to the sum of these functions $\frac{1}{a-b}(e^{at}-e^{bt})$ in the time domain. Using the Laplace transform of Mittag-Leffler type functions, we develop similar representations for convolutions of $P/R$ functions.
This property is foundational for Mittag-Leffler functions. Especially elegantly, it generalizes to convolutions of $P/R$ functions of orders $\alpha_1,\alpha_2$, whose quotient is rational. In this case, we show that their convolution can be evaluated as a finite linear combination of $P/R$ functions. This serves as an efficient computational tool for solving fractional-differential models with nonzero forcing terms and for other applications. Furthermore, even when $\frac{\alpha_1}{\alpha_2}$ is irrational, the convolution of two $P/R$ functions can be written as a series of $P/R$ functions. Additionally, we present an important generalization of Euler's identity to Mittag-Leffler functions, which is fundamental to understanding how Mittag-Leffler functions of different orders relate.


This paper is organized as follows. In Section \ref{sec:2} we define the Riemann-Liouville and Caputo fractional derivatives and derive their eigenfunctions, motivating the class of $P/R$ functions. In Section \ref{sec:3} we derive a general convolution-to-series identity for $P/R$ functions, a generalization of Euler's formula, along with convolution-to-sum identities for $P/R$ functions with rationally-related orders. In Section \ref{sec:4}, we demonstrate the strength of the results in Section \ref{sec:3} by applying them to an inhomogeneous anomalous diffusion model and an attenuated wave model (Caputo-Wismer-Kelvin/fractional Kelvin-Voigt model). The work concludes in Section \ref{sec:5}, with Appendix \ref{secA1} containing definitions and results related to the Laplace transform, which are referenced throughout the paper.

 \section{Review of fractional operators and eigenfunctions} \label{sec:2}

\setcounter{section}{2} \setcounter{equation}{0} 

\subsection{\bf Operator definitions}
\label{subsec:2.definitions}

The conventions for fractional calculus in this paper are built on the Riemann-Liouville fractional integral. A history of the conventions of fractional calculus can be found in \cite{Ross1975_History_of_Fractional_calc}.
 \begin{definition}
\label{def:fractionalIntegral}
Provided it converges, the order-$\nu \, (\nu>0)$ Riemann-Liouville fractional integral of $f$ based at $t_0\in\R$ is defined as
\begin{equation}
    \J[t_0]{\nu}{} [f](t) :=  \frac{1}{\Gamma(\nu)} \int _{t_0}^t \!\!f(z) (t-z)^{\nu-1} dz \label{eq:fractionalIntegral}
\end{equation}
For $\nu=0$, $J^\nu$ is taken as the identity operator.
\end{definition}
In this paper we assume $t_0 = 0$, and suppress the subscript.
In addition to other values of $t_0$, there are further conventions for the bounds on the fractional integral, such as $(-\infty,t), (-\infty,+\infty),(t,\infty)$ among others, which can be found in \cite{de_Oliveira_2014_review_of_Defns}.

 \begin{definition}
 \label{def:derivativeRL}
Provided the fractional integral converges, the order-$\alpha$ Riemann-Liouville and Caputo fractional derivatives of $f$ based at $0$ are defined as
     \begin{equation}
    \RL[]{\alpha}{}[f](t) := D^{\lceil \alpha \rceil} \J[]{\lceil\alpha\rceil-\alpha}{} [f](t)
    \label{eq:derivativeRL}
\end{equation}
\begin{equation}
    \caputo[]{\alpha}{}[f](t) :=  \J[ ]{\lceil\alpha\rceil-\alpha}{} [D^{\lceil \alpha \rceil}f](t)\label{eq:derivativeCaputo}
\end{equation}
respectively, where $\lceil\alpha\rceil$ is the ceiling of $\alpha$ ($\alpha$ rounded up to the next integer). For $\alpha<0$, both derivatives are taken as $\J[]{ -\alpha}{}[f](t).$
\end{definition}

 The approach to fractional operators exploited in this paper is based on the Laplace transform \cite{Gorenflo_Mainardi_1997_Integral_differential_eqs_fractional_order,Oldham_Spanier_1974_fractional_calculus}. The properties of the Laplace transform used here are presented in the Appendix.

 \begin{lemma}\label{lemma:laplace_int}
     For $\nu \geq 0$, if $f$ has a Laplace transform, then \begin{equation}
     \El[\J[]{\nu}{}[f](t)](s) = s^{-\nu} \El[f](s)
 \end{equation}
 \end{lemma}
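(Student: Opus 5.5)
The plan is to write $\J[]{\nu}{}[f]$ as a Laplace convolution and then apply the convolution theorem from the Appendix. The case $\nu=0$ is immediate: $\J[]{0}{}$ is the identity and $s^{0}=1$. So assume $\nu>0$. With base point $t_0=0$, Definition \ref{def:fractionalIntegral} reads
\[
\J[]{\nu}{}[f](t)=\frac{1}{\Gamma(\nu)}\int_0^t f(z)(t-z)^{\nu-1}\,dz=\bigl(f*k_\nu\bigr)(t),\qquad k_\nu(t):=\frac{t^{\nu-1}}{\Gamma(\nu)}.
\]
This is exactly the Laplace convolution on $[0,\infty)$ of $f$ with the power kernel $k_\nu$.

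The first step is to compute $\El[k_\nu](s)$. Substituting $u=st$ for real $s>0$ and using the Gamma integral gives
\[
\int_0^\infty e^{-st}\frac{t^{\nu-1}}{\Gamma(\nu)}\,dt=\frac{1}{\Gamma(\nu)}\,s^{-\nu}\int_0^\infty e^{-u}u^{\nu-1}\,du=s^{-\nu}.
\]
The integral converges at $0$ precisely because $\nu>0$. The identity then extends to $\operatorname{Re}s>0$ by analytic continuation, using the principal branch of $s^{-\nu}$.

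The second step is to apply the convolution theorem $\El[f*g]=\El[f]\,\El[g]$. This yields $\El[\J[]{\nu}{}[f]](s)=s^{-\nu}\El[f](s)$ on the common half-plane of convergence, namely $\operatorname{Re}s>\max(\sigma_f,0)$, where $\sigma_f$ is the abscissa of convergence of $f$.

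The main point needing care is justifying the convolution theorem in this setting. The kernel $k_\nu$ is only locally integrable, not bounded, near $0$ when $\nu<1$. I would handle this by Fubini–Tonelli: on a half-plane where $f$ is absolutely Laplace-transformable, $\int_0^\infty\int_0^t |e^{-st}f(z)k_\nu(t-z)|\,dz\,dt$ factors after the change of variables $t=z+r$ into $\El[|f|](\operatorname{Re}s)\cdot\El[k_\nu](\operatorname{Re}s)$. Both factors are finite, so swapping the order of integration is legitimate. Carrying out that swap gives the product formula, and with it the lemma.
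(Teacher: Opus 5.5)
Your proposal is correct and follows the same route as the paper: write $\J[]{\nu}{}[f]$ as the causal convolution $f*\frac{t^{\nu-1}}{\Gamma(\nu)}$, use $\El\bigl[\frac{t^{\nu-1}}{\Gamma(\nu)}\bigr]=s^{-\nu}$, apply the convolution theorem, and treat $\nu=0$ as the trivial identity case. Your Fubini--Tonelli step is extra rigor that the paper omits, but it only proves the identity where $\El[f]$ converges absolutely, whereas you state it on all of $\operatorname{Re}s>\max(\sigma_f,0)$; covering that larger region needs a Mertens-type convolution theorem (one transform absolutely convergent, the other merely convergent), not Fubini.
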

 \begin{proof}
 Note that $\J[]{\nu}{}[f](t) = \frac{1}{\Gamma(\nu)}\int_0^tf(\tau) (t-\tau)^{\nu-1}  d \tau $ is a causal convolution of $f$ with $\frac{t^{\nu-1}}{\Gamma(\nu)}$. By the convolution theorem (\ref{thm:ConvolutionTheorem} in the Appendix), \[
 \El\left[f*\frac{t^{\nu-1}}{\Gamma(\nu)}\right] = \El[f] \El \left[ \frac{t^{\nu-1}}{\Gamma(\nu)}\right],
 \]
 and the Laplace transform of $\frac{t^{\nu-1}}{\Gamma(\nu)}$ is  $s^{-\nu}$ (see \ref{eq:LaplaceOfPower} ). If $\nu=0$, $J^\nu$ is the identity and the result follows trivially.
 \end{proof}
 
Using Lemma \ref{lemma:laplace_int} and the properties of $\frac{d}{dt}$ under the Laplace transform (\ref{eq:LaplaceOfD}, \ref{eq:LaplaceOfDn}) we can derive the Laplace transform of the Caputo and Riemann-Liouville derivatives.
\begin{lemma}\label{thm:LaplaceOfCaputo}
    Assuming $f^{(\ell)}$ is continuous on $[0,\infty)$ for $\ell=0,1,\ldots,\lceil\alpha\rceil-1$, the order-$\lceil\alpha\rceil$ derivative of $f$ is continuous on $(0,\infty),$ and the Laplace transform of $f(t)$  is $F(s)$, then
    \begin{equation}
        \El[\caputo[]{\alpha}{} f](s) = s^\alpha F(s) - \sum_{\ell = 0}^{\lceil\alpha\rceil-1} s^{\alpha-1-\ell}f^{(\ell)}(0) \label{eq:LaplaceOfCaputo}
    \end{equation}
\end{lemma}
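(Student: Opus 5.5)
Let $n=\lceil\alpha\rceil$. By the definition of the Caputo derivative, $\caputo[]{\alpha}{} f = \J[]{n-\alpha}{}[D^{n} f]$. The plan is to compute the Laplace transform in two stages: first the inner integer-order derivative, then the outer fractional integral.

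\textbf{Step 1 (inner derivative).} I would invoke the integer-order differentiation rule (\ref{eq:LaplaceOfDn}) from the Appendix. The hypotheses are that $f,\dots,f^{(n-1)}$ are continuous on $[0,\infty)$ and that $f^{(n)}$ is continuous on $(0,\infty)$. Under these, the rule gives
\[
\El[D^{n}f](s)=s^{n}F(s)-\sum_{\ell=0}^{n-1}s^{n-1-\ell}f^{(\ell)}(0).
\]
This is the standard result obtained by repeated integration by parts, so I take it as given.

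\textbf{Step 2 (outer fractional integral).} I would apply Lemma \ref{lemma:laplace_int} with $\nu=n-\alpha\ge 0$ and the function $D^n f$. This yields $\El[\caputo[]{\alpha}{} f](s)=s^{\alpha-n}\El[D^nf](s)$.

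\textbf{Step 3 (combine).} Substituting Step 1 into Step 2, the factor $s^{\alpha-n}$ multiplies $s^nF(s)$ to give $s^\alpha F(s)$. It multiplies each term $s^{n-1-\ell}f^{(\ell)}(0)$ to give $s^{\alpha-1-\ell}f^{(\ell)}(0)$. This is exactly (\ref{eq:LaplaceOfCaputo}).

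If $\alpha$ is a positive integer, then $\nu=0$ and $J^0$ is the identity, so the formula reduces to Step 1. If $\alpha\le 0$, the paper's convention $\caputo[]{\alpha}{}=\J[]{-\alpha}{}$ applies instead; I read the statement as intended for $\alpha>0$, where the sum is meaningful.

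\textbf{Main obstacle.} The only delicate point is justifying that $D^nf$ has a Laplace transform, so that Lemma \ref{lemma:laplace_int} can be applied. The hypotheses give only continuity of $f^{(n)}$ on $(0,\infty)$, which allows a singularity at $0$.

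I would handle this by assuming, as is implicit in (\ref{eq:LaplaceOfDn}), that $f^{(n)}$ is locally integrable near $0$ and of exponential order. Then the boundary terms $f^{(\ell)}(t)e^{-st}$ vanish as $t\to\infty$, and the limits as $t\to0^+$ equal $f^{(\ell)}(0)$ by continuity on $[0,\infty)$. The integration by parts producing Step 1 is then valid on $(\epsilon,T)$ with $\epsilon\to0$ and $T\to\infty$. Everything else is algebra.
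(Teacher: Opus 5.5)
Your proposal is correct and follows essentially the paper's route: write $J^{\lceil\alpha\rceil-\alpha}D^{\lceil\alpha\rceil}f$ for the Caputo derivative, apply the integer-order rule (\ref{eq:LaplaceOfDn}) and then Lemma \ref{lemma:laplace_int}, and multiply through by $s^{\alpha-\lceil\alpha\rceil}$. Your remark that $D^{\lceil\alpha\rceil}f$ must itself be Laplace-transformable (locally integrable near $0$ and of exponential order) is a hypothesis the paper leaves implicit, so stating it makes the argument more careful.
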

\begin{proof}
    
\begin{align}
    \El[\caputo[]{\alpha}{} f](s) =& \El \left[ \J[]{\lceil\alpha\rceil-\alpha}{} \,\frac{d^{\lceil\alpha\rceil}}{dt^{\lceil\alpha\rceil} }f \right] \notag
\\ =& s^{\alpha - \lceil\alpha\rceil} \left(s^{\lceil\alpha\rceil}F(s) - \sum_{\ell = 0}^{\lceil\alpha\rceil-1} s^{\lceil\alpha\rceil-1-\ell}f^{(\ell)}(0)  \right) \notag 
\\=& s^\alpha F(s) - \sum_{\ell = 0}^{\lceil\alpha\rceil-1} s^{\alpha-1-\ell}f^{(\ell)}(0)  
\end{align}
\end{proof}

\begin{lemma}\label{thm:LaplaceOfRL}
Let $g(t) = \J[]{\lceil\alpha\rceil-\alpha}{}[f](t)$. If $g$ and its derivatives up to order $\lceil\alpha\rceil-1$ are continuous on $[0,\infty)$, the order-$\lceil\alpha\rceil$ derivative of $g$ is continuous on $(0,\infty)$, and the Laplace transform of $f$ is $F(s) $ then
    \begin{equation}\label{eq:LaplaceOfRL}
        \El[ \RL[ ]{\alpha}{}f] = s^\alpha F(s) -\sum_{k=0}^{\lceil\alpha\rceil-1} s^k (\RL[]{\alpha-1-k}{}f) |_{t=0_+}  =
        s^\alpha F(s) - \sum_{\ell=0}^{\lceil\alpha\rceil-1} s^{\lceil\alpha\rceil-1-\ell}g^{(\ell)}(0)  
    \end{equation}
\end{lemma}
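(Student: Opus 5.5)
The plan mirrors the proof of Lemma \ref{thm:LaplaceOfCaputo}, with the order of the operators reversed. Set $n=\lceil\alpha\rceil$ and $g=\J[]{n-\alpha}{}[f]$, so that by Definition \ref{def:derivativeRL} we have $\RL[]{\alpha}{}f = D^{n} g$.

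First, by Lemma \ref{lemma:laplace_int} with $\nu=n-\alpha\ge 0$, the transform of $g$ is $G(s)=s^{\alpha-n}F(s)$. Second, the regularity hypotheses on $g$ are exactly those needed for the $n$-th derivative rule (\ref{eq:LaplaceOfDn}) in the Appendix. Applying that rule to $g$ gives
\[
\El[\RL[]{\alpha}{}f](s)=s^{n}G(s)-\sum_{\ell=0}^{n-1}s^{n-1-\ell}g^{(\ell)}(0) = s^{\alpha}F(s)-\sum_{\ell=0}^{n-1}s^{n-1-\ell}g^{(\ell)}(0).
\]
This is the second form in (\ref{eq:LaplaceOfRL}).

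Next we convert this to the first form by identifying $g^{(\ell)}$ with a Riemann-Liouville derivative. The claim is that $g^{(\ell)}=D^{\ell}\J[]{n-\alpha}{}f = \RL[]{\alpha-n+\ell}{}f$. The order $\beta=\alpha-n+\ell$ lies in $(\ell-1,\ell]$.
\begin{itemize}
\item For $\ell\ge 1$ and $\beta>0$, we have $\lceil\beta\rceil=\ell$ and $\lceil\beta\rceil-\beta=n-\alpha$, so the claim is Definition \ref{def:derivativeRL} verbatim.
\item For $\ell=0$, we have $\beta=\alpha-n\le 0$, and the definition's convention for negative order, $J^{-\beta}=J^{n-\alpha}$, again gives $g$.
\item The edge case $\beta=0$ (integer $\alpha$ with $\ell=0$) reduces to the identity operator, consistent with $J^0=\mathrm{id}$.
\end{itemize}
Reindexing with $k=n-1-\ell$ turns $s^{n-1-\ell}$ into $s^{k}$ and $\beta$ into $\alpha-1-k$. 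This yields $\sum_{k=0}^{n-1}s^{k}\,(\RL[]{\alpha-1-k}{}f)|_{t=0_+}$. Evaluation at $0$ is understood as the right limit, which exists by continuity of $g^{(\ell)}$ on $[0,\infty)$.

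The main obstacle is the bookkeeping in this identification. It requires checking that the ceiling of $\alpha-1-k$ and the convention for nonpositive orders match Definition \ref{def:derivativeRL}, which the case split above handles. A secondary point is verifying that $g$ has a Laplace transform, so that (\ref{eq:LaplaceOfDn}) applies. This follows from Lemma \ref{lemma:laplace_int}, since $f$ is assumed to have a transform.
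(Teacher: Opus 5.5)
Your proposal is correct and follows the paper's proof essentially step for step: with $n=\lceil\alpha\rceil$, apply the $n$-th derivative rule to $g=J^{n-\alpha}f$, use Lemma~\ref{lemma:laplace_int} to turn $s^{n}G(s)$ into $s^{\alpha}F(s)$, identify $g^{(\ell)}$ with the Riemann--Liouville derivative of order $\ell+\alpha-n$, and reindex with $k=n-1-\ell$. Your case split is slightly more careful than the paper's. The paper asserts that the order $\alpha-\lceil\alpha\rceil$ at $k=n-1$ is negative, which overlooks integer $\alpha$, where that order is $0$ and $J^{0}$ is the identity.
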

\begin{proof}
    By definition, the Laplace transform of the RL derivative of $f$ is
\begin{align*}
     \El \left[   \frac{d^{\lceil\alpha\rceil}}{dt^{\lceil\alpha\rceil} }  \J[]{\lceil\alpha\rceil-\alpha}{}f\right](s)
     =& \El \left[   \frac{d^{\lceil\alpha\rceil}}{dt^{\lceil\alpha\rceil} } g \right](s)              \notag
    \\ =& s^{\lceil \alpha\rceil  } \El[g](s)  - \sum_{\ell=0}^{\lceil\alpha\rceil-1} s^{\lceil\alpha\rceil-1-\ell}  g^{(\ell)}(0)
    \\=&s^{\alpha} F(s) - \sum_{\ell=0}^{\lceil\alpha\rceil-1} s^{\lceil\alpha\rceil-1-\ell}  \left(\frac{d^\ell}{dt^\ell} (\J[]{\lceil\alpha\rceil-\alpha}{} f )\right)|_{t=0_+} \notag
    \\=&s^\alpha F(s) - \sum_{\ell=0}^{\lceil\alpha\rceil-1} s^{\lceil\alpha\rceil-1-\ell}  \left( \RL[]{\ell +\alpha-\lceil\alpha\rceil}{}f \right)|_{t=0_+}
\end{align*}
and to get its final form, we reverse the sum with $k=\lceil\alpha\rceil-1-\ell.$ Lastly note that for $k= \lceil\alpha\rceil-1$, the order of the RL derivative is $\alpha-\lceil\alpha\rceil <0$. This is simply $\J[]{\lceil\alpha\rceil-\alpha}{}$.

\end{proof}
\textbf{Remark.} Note that the straightforward Laplace transform of the Caputo derivatives is analogous to that of integer derivatives, only depending on the initial value of $f$ and its integer derivatives. The same is not true for the Riemann-Liouville derivatives, which explicitly depend on the initial fractional derivatives of $f$. This can lead to singularities at $t=0$, among other problems. For physical interpretations of these fractional initial values, see \cite{Heymans_2005_Physical_interpretation_of_RL}. The comparative clarity of Caputo initial conditions is a major reason for their widespread use, though introduced later than the Riemann-Liouville derivative.

\subsection{\bf Eigenfunctions and function definitions}

For both Caputo and RL derivatives, consider the eigenfunction equation \begin{equation}
    D^\alpha f = \lambda f \label{eq:eigenfunction_equation}.
\end{equation}
The corresponding eigenfunctions are derived via the Laplace transform. We introduce the following functions, which characterize the eigenfunctions of the fractional derivatives.
\begin{definition} \label{def:mittag_leffler_2_param}
For $z\in \C$, $\alpha,\beta\in \R_{>0}$, The Mittag-Leffler function \cite{Wiman_1905_Uber_den_Fundame_etc} is defined as
    \begin{equation}
        E_{\alpha,\beta}(z) := \sum_{n=0}^\infty \frac{z^n}{\Gamma(\alpha n + \beta)}.
    \end{equation}
\end{definition}
The convergence of this series for the range of parameters stated is due to the Gamma function in the denominator.
Following Lorenzo and Hartley in \cite{LorenzoHartley1999_generalized_functions_for_fractional,lorenzo2016_fractional_Trig_book}, 
we define the Mittag-Leffler type functions $R_{\alpha,v}(\lambda,t)$:
\begin{definition} \label{def:R_func}
For $\lambda \in \C,$ $\alpha>0,$ $ v<\alpha,$ $ t>0$ ( $t \geq 0 $ for $v \leq \alpha-1$), the $R$ function is defined as
    \begin{equation}
        R_{\alpha,v}(\lambda,t) := t^{\alpha-v-1}E_{\alpha,\alpha-v}(\lambda t^\alpha) = \sum_{n=0}^\infty \frac{\lambda ^n t^{\alpha(n+1)-v-1}}{\Gamma(\alpha(1+n)-v)} .
    \end{equation}
\end{definition}
%
We show below that the Mittag-Leffler type functions $R_{\alpha,v}(\lambda,t)$ are eigenfunctions of the Riemann-Liouville fractional derivative operator. To parallel the $R$-function convention, we define functions $P_{\alpha,w}(\lambda,t)$: 
\begin{definition}\label{def:P_func} For $\lambda \in \C,$ $\alpha >0,$ $w>-1,$ $t >0$ ($t \geq 0$ for $w\geq 0$)
    \begin{equation}
        P_{\alpha,w}(\lambda,t) := t^w E_{\alpha,w+1}(\lambda t^\alpha) = \sum_{n=0}^\infty   \frac{\lambda^n t^{\alpha n + w}}{\Gamma(\alpha n+w+1)}
    \end{equation}
\end{definition} 
The $P$ functions are the eigenfunctions of the Caputo fractional derivative operator.\\
\textbf{ Remark.}
Note that the two forms of the Mittag-Leffler function, $R$ and $P$, are related to each other:
\begin{equation}\label{eq:P_and_R}
\begin{split}
    R_{\alpha,v}(\lambda,t) = P_{\alpha,\alpha-v-1}(\lambda,t)
    \\
    P_{\alpha,w}(\lambda,t) = R_{\alpha,\alpha-w-1}(\lambda,t)
\end{split}
\end{equation}

Finally, we derive the Laplace transform of the $R$ and $P$ functions:
\begin{lemma} \label{lemma:Laplace_of_R_P}
   The Laplace transform of functions $P$ and $R$, given in Definitions \ref{def:P_func} and \ref{def:R_func}, with parameters $\alpha>0,\lambda\in\C,v<\alpha,w>-1$ are given by 
    \begin{equation} \label{eq:Laplace_of_P_R}
        \El[R_{\alpha,v}(\lambda,t)](s) =\frac{s^v}{s^\alpha-\lambda}, \quad \El[P_{\alpha,w}(\lambda,t)](s) =\frac{s^{\alpha-1-w}}{s^\alpha-\lambda}.
    \end{equation}
\end{lemma}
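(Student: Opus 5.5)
It suffices to treat $P_{\alpha,w}$. By the relation (\ref{eq:P_and_R}), $R_{\alpha,v}(\lambda,t)=P_{\alpha,\alpha-v-1}(\lambda,t)$ with $w=\alpha-v-1$; the condition $v<\alpha$ is exactly $w>-1$, and $s^{\alpha-1-w}=s^{v}$. So the $R$ formula follows from the $P$ formula.

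For $P_{\alpha,w}$ I will take the Laplace transform of the power series term by term, using the power rule (\ref{eq:LaplaceOfPower}) from the Appendix. Since $\alpha n+w>-1$ for every $n\ge 0$, that rule gives
\[
\El\!\left[\frac{t^{\alpha n+w}}{\Gamma(\alpha n+w+1)}\right](s)=s^{-\alpha n-w-1}, \qquad \Re s>0 .
\]
Summing, I expect the formal identity
\[
\El[P_{\alpha,w}(\lambda,t)](s)=\sum_{n=0}^\infty \lambda^n s^{-\alpha n-w-1}=s^{-w-1}\,\frac{1}{1-\lambda s^{-\alpha}}=\frac{s^{\alpha-1-w}}{s^\alpha-\lambda}.
\]
The geometric series converges for real $s>|\lambda|^{1/\alpha}$.

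The main obstacle is justifying the exchange of the sum with the Laplace integral. I will use Tonelli/Fubini with absolute values, for real $s>|\lambda|^{1/\alpha}$. The series of absolute values is
\[
\sum_n |\lambda|^n\int_0^\infty e^{-st}\frac{t^{\alpha n+w}}{\Gamma(\alpha n+w+1)}\,dt=\sum_n |\lambda|^n s^{-\alpha n-w-1}<\infty .
\]
Each integral is finite because $t^{w}$ is integrable at $0$ when $w>-1$. The dominating function is $e^{-st}\sum_n|\lambda|^n t^{\alpha n+w}/\Gamma(\alpha n+w+1)=e^{-st}P_{\alpha,w}(|\lambda|,t)$, which is integrable by the computation above. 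This justifies the interchange for real $s>|\lambda|^{1/\alpha}$.

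The formula then holds on that half-line. It extends to the half-plane $\Re s>|\lambda|^{1/\alpha}$ either by running the same Fubini argument with $|e^{-st}|=e^{-(\Re s)t}$, or by analytic continuation, since both sides are analytic there (principal branch of $s^\alpha$). This proves the $P$ formula, and the $R$ formula follows by the substitution described at the start.
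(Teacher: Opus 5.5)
Your proposal is correct and follows the paper's own argument. You transform the $P$ series term by term via the power rule, sum the geometric series for $\Re s>|\lambda|^{1/\alpha}$, and transfer the result to $R$ through $R_{\alpha,v}=P_{\alpha,\alpha-v-1}$; your Tonelli/Fubini justification of the sum--integral interchange, dominated by $e^{-(\Re s)t}P_{\alpha,w}(|\lambda|,t)$, supplies a step the paper leaves implicit.
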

\begin{proof}
    We will prove this for the case of $P$, and use \ref{eq:P_and_R} to derive the Laplace transform for $R$ functions. The series representation of $P_{\alpha,w}(\lambda,t)$ is 
    \[
    P_{\alpha,w}(\lambda,t) = \sum_{n=0}^\infty \frac{\lambda^n t^{n\alpha +w}}{\Gamma(n\alpha+w+1)}.
    \]
Using Lemma \ref{lemma:LaplaceOfPower} in the Appendix,
the Laplace transform of the $n$th term in the series is 
\begin{equation}
    \El\left[ \frac{ \lambda^n  t^{n\alpha+w}  }{\Gamma(n \alpha+w+1)}  \right](s) = \lambda^n s^{-1-n\alpha -w} = s^{-1-w}(\lambda s^{-\alpha})^n.
\end{equation}
    Hence $P_{\alpha,w}(\lambda,t)$ corresponds to the series
    \[
    s^{-1-w}\sum_{n=0}^\infty (\lambda s^{-\alpha})^n
    \]
    in the Laplace domain. To demonstrate that this geometric series converges, note that $\alpha>0,$ and so for $\Re[s] > |\lambda|^{1/\alpha}$ we have that $|\lambda s^{-\alpha}|<1$. Hence,
    \begin{equation}  \label{eq:Laplace_of_P}
        \El\left[P_{\alpha,w}(\lambda,t) \right ](s) = \frac{s^{-w-1}}{1-\lambda s^{-\alpha}} = \frac{s^{\alpha-w-1}}{s^\alpha-\lambda}
    \end{equation}
    as claimed.

    For the Laplace transform of $R_{\alpha,v}(\lambda,t),$ we use \ref{eq:Laplace_of_P} and  \ref{eq:P_and_R}. Indeed, from \ref{eq:P_and_R}, $R_{\alpha,v}(\lambda,t)= P_{\alpha,\alpha-v-1}(\lambda,t)$, it follows directly from \ref{eq:Laplace_of_P} that 
    \begin{equation}
        \El[R_{\alpha,v}(\lambda,t)] = \frac{s^v}{s^\alpha-\lambda}.
    \end{equation}

\end{proof}

With these results, we now prove that $P$ and $R$ functions characterize the eigenfunctions of the Caputo and Riemann-Liouville fractional derivatives.

\begin{theorem}
\label{thm:solution_to_eigenproblem}
    For $\alpha>0,\lambda\in\C,t\in (0,\infty),$ the solution (smooth for $t>0$) to the eigenfunction equation for eigenvalue $\lambda$, $D^\alpha f(t) = \lambda f(t)$, is given by
\begin{equation}
    \label{eq:eigenfunction_solutions}
    \begin{split}
        f_{RL}(t) =& \sum_{j=0}^{\lceil\alpha\rceil-1} c_j R_{\alpha,j}(\lambda,t)
        \\
        f_{C}(t) =& \sum_{j=0}^{\lceil\alpha\rceil-1} c_j P_{\alpha,j}(\lambda,t)
    \end{split}
\end{equation}
    for the Riemann-Liouville ($\RL[]{\alpha}{}$) and Caputo $(\caputo[]{\alpha}{})$ fractional derivative operators, respectively. Here, the coefficients $c_j \in \C$  specify the initial values of the derivatives of the eigenfunction.
\end{theorem}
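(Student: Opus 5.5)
We prove the Caputo case first and then adapt the argument to the Riemann--Liouville case; both go through the Laplace domain. Let $n=\lceil\alpha\rceil$ and let $F(s)=\El[f](s)$. Assume $f$ satisfies the hypotheses of Lemma \ref{thm:LaplaceOfCaputo} (resp. Lemma \ref{thm:LaplaceOfRL}) and has at most exponential growth, so that its Laplace transform exists. Taking the Laplace transform of $\caputo[]{\alpha}{} f=\lambda f$ and using (\ref{eq:LaplaceOfCaputo}) gives
\[
s^\alpha F(s)-\sum_{\ell=0}^{n-1}s^{\alpha-1-\ell}f^{(\ell)}(0)=\lambda F(s),
\qquad\text{so}\qquad
F(s)=\sum_{\ell=0}^{n-1}f^{(\ell)}(0)\,\frac{s^{\alpha-1-\ell}}{s^\alpha-\lambda}.
\]
By Lemma \ref{lemma:Laplace_of_R_P}, each summand is $\El[P_{\alpha,\ell}(\lambda,t)]$, and $\ell>-1$ as the lemma requires. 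By linearity and uniqueness of the Laplace transform (Lerch's theorem, in the Appendix), $f_C=\sum_\ell c_\ell P_{\alpha,\ell}(\lambda,t)$ with $c_\ell=f^{(\ell)}(0)$.

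For the Riemann--Liouville case, (\ref{eq:LaplaceOfRL}) gives in the same way
\[
F(s)=\sum_{k=0}^{n-1}\bigl(\RL[]{\alpha-1-k}{}f\bigr)\big|_{t=0_+}\,\frac{s^k}{s^\alpha-\lambda}.
\]
By Lemma \ref{lemma:Laplace_of_R_P}, each summand is $\El[R_{\alpha,k}(\lambda,t)]$; the condition $k\le n-1<\alpha$ gives $v<\alpha$ as required. Hence $f_{RL}=\sum_k c_kR_{\alpha,k}(\lambda,t)$, where $c_k$ is the initial fractional derivative $\RL[]{\alpha-1-k}{}f(0_+)$. This matches the statement that the coefficients encode the initial values.

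The converse direction must be checked too: every such combination is actually a solution. I would verify this directly from the series rather than through the Laplace transform. Apply $\J[]{n-\alpha}{}$ termwise, using $\J[]{\nu}{}t^\mu/\Gamma(\mu+1)=t^{\mu+\nu}/\Gamma(\mu+\nu+1)$, then differentiate $n$ times, or differentiate first for the Caputo case.

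For $P_{\alpha,j}$ with $j\le n-1$, the $m=0$ term is $t^j/j!$, and its $n$-th derivative vanishes. Each remaining term maps as
\[
\frac{t^{\alpha m+j}}{\Gamma(\alpha m+j+1)}\;\longmapsto\;\frac{t^{\alpha(m-1)+j}}{\Gamma(\alpha(m-1)+j+1)},
\]
which yields $\lambda P_{\alpha,j}$.

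For $R_{\alpha,k}$, applying $\J[]{n-\alpha}{}$ to the $m=0$ term gives $t^{n-k-1}/\Gamma(n-k)$, a polynomial of degree less than $n$, which is killed by $D^n$. The other terms shift index in the same way, giving $\lambda R_{\alpha,k}$.

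Termwise operations are justified by locally uniform convergence of the series on compact subsets of $(0,\infty)$, together with integrability near $0$. For $R$ the exponent $\alpha-k-1>-1$, so each term is integrable at $0$.

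I expect the main obstacle to be rigor rather than computation. One issue is justifying that an arbitrary solution has a Laplace transform, and that for the Riemann--Liouville case the hypotheses of Lemma \ref{thm:LaplaceOfRL} hold, since $R_{\alpha,k}$ may be singular at $0$ when $k>\alpha-1$. I would handle this by restricting the uniqueness claim to the function class in which those lemmas apply, namely exponentially bounded functions with the stated continuity of $g=\J[]{n-\alpha}{}f$. The direct series verification then establishes existence independently of that class.
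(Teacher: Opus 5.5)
Your Laplace-domain argument is the paper's own proof. You transform $D^\alpha f=\lambda f$ with Lemma \ref{thm:LaplaceOfCaputo} or Lemma \ref{thm:LaplaceOfRL}, solve for $F(s)$ as a combination of $s^{\alpha-1-\ell}/(s^\alpha-\lambda)$ or $s^{k}/(s^\alpha-\lambda)$, and read off $P_{\alpha,\ell}$ or $R_{\alpha,k}$ via Lemma \ref{lemma:Laplace_of_R_P}. Your termwise series check that each $P_{\alpha,j}$ and $R_{\alpha,k}$ actually satisfies the equation is a correct addition that the paper's proof never carries out; the paper's corollary simply assumes it. One citation fix: the paper's appendix contains no Lerch or uniqueness theorem, so you must cite that from an outside source.
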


\begin{proof}
    We begin by calculating the Laplace-space representation of the eigenfunction, $f_C$, of the Caputo fractional derivative operator. 
Because it satisfies $\caputo[]{\alpha}{}f_C = \lambda f_C,$ taking the Laplace transform of both sides gives
\begin{align*}
\lambda \El[f_C](s) =&   \El \left[ \caputo[]{\alpha}{} f_C \right].
\end{align*}
Using Lemma \ref{thm:LaplaceOfCaputo} (see \ref{eq:LaplaceOfCaputo}) to represent the Caputo derivative in the Laplace domain:
\begin{align*}
     \lambda \El[f_C](s) =&  
    s^\alpha \El[f_C](s)  - \sum_{j = 0}^{\lceil\alpha\rceil-1} s^{\alpha-1-j}f_C^{(j)}(0) .
\end{align*}
Solving for $\El[f_C]$, we then have
\begin{equation}
    \\
    \El[f_C](s)  =   \sum_{j = 0}^{\lceil\alpha\rceil-1} f^{(j)}(0) \frac{s^{\alpha-1-j}}{s^\alpha-\lambda}.
\end{equation}
Similarly, for $f_{RL}$, the eigenfunction of the Riemann-Liouville derivative, using Lemma \ref{thm:LaplaceOfRL} (see \ref{eq:LaplaceOfRL}):

\begin{align*}
    \lambda \El[f_{RL}](s) =&\El\left[\RL[]{\alpha}{}f_{RL}\right] (s)  
    \\
     \lambda \El[f_{RL}](s) =& s^\alpha \El[f_{RL}](s) - \sum_{j=0}^{\lceil\alpha\rceil-1} s^{j}  \left(  \RL[]{\alpha-1-j}{}f_{RL}\right)\big\vert_{t=0} .
\end{align*}
Solving for $f_{RL}$ gives that
\begin{equation}
   \El[f_{RL}](s) = \sum_{j=0}^{\lceil\alpha\rceil-1}  \left(   \RL[]{ \alpha -j-1}{}  f_{RL}\right)\big|_{t=0}   \frac{s^j}{s^\alpha-\lambda}.
\end{equation}
Ultimately, using coefficients $c_j$ to encapsulate  initial conditions,
\begin{equation}
    \El\left[f_C\right] = \sum_{j = 0}^{\lceil\alpha\rceil-1} c_j \frac{s^{\alpha-1-j}}{s^\alpha-\lambda}, \quad \El\left[f_{RL}\right] = \sum_{j = 0}^{\lceil\alpha\rceil-1} c_j \frac{s^{j}}{s^\alpha-\lambda}.
\end{equation}
Using Lemma \ref{lemma:Laplace_of_R_P} to represent the functions $f_C$ and $f_{RL}$ in the time domain gives
\begin{equation}
    f_C(t) = \sum_{j=0}^{\lceil\alpha\rceil-1} c_j P_{\alpha,j}(\lambda,t), \qquad f_{RL}(t) = \sum_{j=0}^{\lceil\alpha\rceil-1} c_j R_{\alpha,j}(\lambda,t)
\end{equation}
as claimed.
\end{proof}

\begin{corollary}
    For $\alpha>0,$ $\lambda \in \C$, $t>0,$ $k \in \{0,\cdots,\lceil\alpha\rceil-1\}$,  $P_{\alpha,k}(\lambda,t)$ and $R_{\alpha,k}(\lambda,t)$ are eigenfunctions of the Caputo and Riemann-Liouville fractional derivatives, respectively:
    \begin{equation}
        \caputo[]{\alpha}{} P_{\alpha,k}(\lambda,t) = \lambda P_{\alpha,k}(\lambda,t), \quad \RL[]{\alpha}{}R_{\alpha,k}(\lambda,t) = \lambda R_{\alpha,k}(\lambda,t)
    \end{equation}
\end{corollary}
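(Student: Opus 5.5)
The plan is to verify the eigen-relation directly in the Laplace domain, using Lemma \ref{lemma:Laplace_of_R_P} together with Lemmas \ref{thm:LaplaceOfCaputo} and \ref{thm:LaplaceOfRL}, and then conclude by uniqueness of the Laplace transform. The identity $\caputo[]{\alpha}{}f=\lambda f$ is equivalent to $\El[\caputo[]{\alpha}{}f]=\lambda \El[f]$ for continuous functions of exponential order. The Laplace transforms of $P_{\alpha,k}$ and $R_{\alpha,k}$ are known explicitly, so the whole task reduces to computing the initial-value terms in \eqref{eq:LaplaceOfCaputo} and \eqref{eq:LaplaceOfRL}.

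\emph{Caputo case.} Let $f=P_{\alpha,k}(\lambda,\cdot)$ with $0\le k\le\lceil\alpha\rceil-1$.
\begin{itemize}
\item From the series $\sum_n \lambda^n t^{\alpha n+k}/\Gamma(\alpha n+k+1)$, termwise differentiation (justified by uniform convergence on compacts, since the Gamma factor dominates) gives $f^{(\ell)}(0)=\delta_{\ell k}$ for $\ell\le\lceil\alpha\rceil-1$.
\item The reason is that every term with $n\ge1$ has exponent $\alpha n+k>\lceil\alpha\rceil-1\ge \ell$, so it vanishes at $0$ after $\ell$ derivatives. The $n=0$ term is $t^k/k!$.
\item This also confirms the regularity hypotheses of Lemma \ref{thm:LaplaceOfCaputo}: $f\in C^{\lceil\alpha\rceil-1}[0,\infty)$, and $f^{(\lceil\alpha\rceil)}$ is continuous on $(0,\infty)$.
\item Then
\[\El[\caputo[]{\alpha}{}f]=s^\alpha\frac{s^{\alpha-1-k}}{s^\alpha-\lambda}-s^{\alpha-1-k}=\lambda\frac{s^{\alpha-1-k}}{s^\alpha-\lambda}=\lambda\El[f].\]
\end{itemize}

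\emph{Riemann--Liouville case.} Let $f=R_{\alpha,k}(\lambda,\cdot)$.
\begin{itemize}
\item Set $g=\J[]{\lceil\alpha\rceil-\alpha}{}f$. By Lemma \ref{lemma:laplace_int}, $\El[g]=s^{k+\lceil\alpha\rceil-\alpha}/(s^\alpha-\lambda)$.
\item Equivalently, integrating termwise (the fractional integral of $t^{\mu}/\Gamma(\mu+1)$ is $t^{\mu+\nu}/\Gamma(\mu+\nu+1)$), we get $g=P_{\alpha,\lceil\alpha\rceil-1-k}(\lambda,\cdot)$, since $\alpha-k-1+\lceil\alpha\rceil-\alpha=\lceil\alpha\rceil-1-k$.
\item The index $\lceil\alpha\rceil-1-k$ lies in $\{0,\dots,\lceil\alpha\rceil-1\}$. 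By the computation in the Caputo case, $g^{(\ell)}(0)=\delta_{\ell,\lceil\alpha\rceil-1-k}$, and $g$ meets the regularity hypotheses of Lemma \ref{thm:LaplaceOfRL}.
\item Substituting into \eqref{eq:LaplaceOfRL} leaves only the term $s^{\lceil\alpha\rceil-1-\ell}=s^{k}$, so
\[\El[\RL[]{\alpha}{}f]=s^\alpha\frac{s^k}{s^\alpha-\lambda}-s^k=\lambda\frac{s^k}{s^\alpha-\lambda}.\]
\end{itemize}

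In both cases, Lerch's uniqueness theorem for continuous functions on $(0,\infty)$ gives the pointwise identity for $t>0$. This is exactly the statement of Theorem \ref{thm:solution_to_eigenproblem} with the coefficient vector $c_j=\delta_{jk}$.

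The main obstacle is justifying the hypotheses rather than the algebra. The termwise differentiation and fractional integration of the series, and the identification of $g$, need care when $\alpha-k-1<0$, because then $R_{\alpha,k}$ is singular at $0$. Passing to $g$ removes this problem, since $g$ is a $P$ function with nonnegative integer index and so is smooth enough at $0$. A further point to check is that the Laplace-domain identity holds on a right half-plane, $\Re s>|\lambda|^{1/\alpha}$, which is enough for uniqueness.
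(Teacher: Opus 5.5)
Your proof is correct, but it runs in the opposite logical direction from the paper's. The paper proves the corollary in one line, by setting $c_j=\delta_{jk}$ in Theorem \ref{thm:solution_to_eigenproblem}. That theorem's proof starts from an assumed solution $f$ and derives what $\El[f]$ must be. In other words, it shows that every eigenfunction is a combination of the $P_{\alpha,j}$ (resp.\ $R_{\alpha,j}$), with $c_j$ identified as the initial data $f^{(j)}(0)$ (resp.\ $\RL[]{\alpha-1-j}{}f|_{t=0}$). Reading the corollary off that result tacitly uses the converse: that $P_{\alpha,k}$ really has initial data $\delta_{jk}$, so the Laplace computation closes up.

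Your argument supplies exactly this missing direction. You compute $P_{\alpha,k}^{(\ell)}(0)=\delta_{\ell k}$ from the series and check the hypotheses of Lemmas \ref{thm:LaplaceOfCaputo} and \ref{thm:LaplaceOfRL}. You then verify $\El[D^\alpha f]=\lambda\El[f]$ directly and finish with Lerch's theorem. The identification $J^{\lceil\alpha\rceil-\alpha}R_{\alpha,k}=P_{\alpha,\lceil\alpha\rceil-1-k}$ is a good device: it reduces the Riemann--Liouville case to the Caputo computation and sidesteps the singularity of $R_{\alpha,k}$ at $t=0$. The paper's route buys brevity and places the corollary inside the full description of the eigenspace. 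Yours actually verifies the direction the corollary asserts.

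One slip: by Lemma \ref{lemma:laplace_int}, $\El[g]=s^{-(\lceil\alpha\rceil-\alpha)}\El[f]=s^{k+\alpha-\lceil\alpha\rceil}/(s^\alpha-\lambda)$, not $s^{k+\lceil\alpha\rceil-\alpha}/(s^\alpha-\lambda)$. The corrected exponent is the one that matches $\El[P_{\alpha,\lceil\alpha\rceil-1-k}]$ from Lemma \ref{lemma:Laplace_of_R_P}. Nothing downstream depends on this, since your later steps use only the termwise identification of $g$ and $F(s)=s^k/(s^\alpha-\lambda)$.
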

\begin{proof}
    Take all the coefficients $c_j=0$ for $j\neq k$ in Theorem \ref{thm:solution_to_eigenproblem} and $c_k=1.$
\end{proof}

\textbf{Remark.} Many applications of fractional calculus assume the order of the derivative is between zero and one, $\alpha \in (0,1)$. In this case, both the Caputo and Riemann-Liouville derivatives have a one-dimensional eigenspace for each eigenvalue. Thus, $E_{\alpha}(\lambda t^\alpha) = P_{\alpha,0}(\lambda,t)$ and $t^{\alpha-1}E_{\alpha,\alpha}(\lambda t^\alpha) = R_{\alpha,0}(\lambda,t)$ are often referred to as 'the' eigenfunction of the Caputo or Riemann-Liouville derivative. Generally, however, the eigenspace of the order-$\alpha$ derivative is $\lceil\alpha\rceil-$dimensional.

\begin{figure}[t]
  \centering
\includegraphics[width=0.42\linewidth]{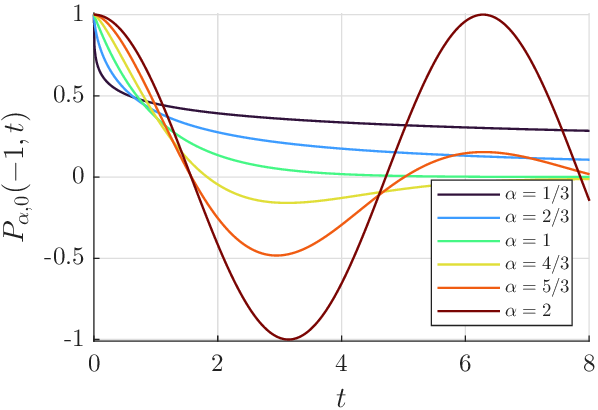}
\,
\includegraphics[width=0.42\linewidth]{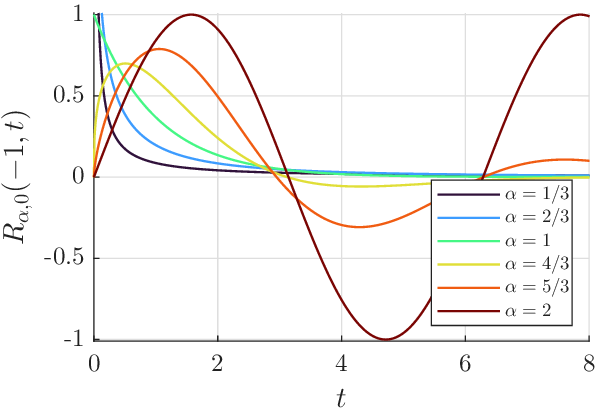}
    \caption{First eigenfunction of the $\alpha$-th Caputo (left) and Riemann–Liouville (right) fractional derivatives, with eigenvalue $-1$, across several values of $\alpha$. Note that the $P_{\alpha,0}(-1,t)$ all initially equal 1, and that for $0<\alpha<1$, $R_{\alpha,0}(-1,t)$ is singular at $t=0$. Mittag-Leffler functions in all plots have been calculated using the methods in \cite{Garrappa_2015_numerical_evaluation_ML}, implemented in MATLAB \cite{Garrappa_2025_MittagLeffler_MATLAB}.}
    \label{fig:eigenfunctionsComparison}
\end{figure}

\section{Convolution to sum identities} \label{sec:3}

\setcounter{section}{3} \setcounter{equation}{0} 

This section discusses convolutions of these Mittag-Leffler type $P$ and $R$ functions.  
We will show that the convolutions of these functions can be efficiently represented as their sums or series. 
The Laplace Convolution Theorem (Theorem \ref{thm:ConvolutionTheorem}) demonstrates that a causal convolution (Definition \ref{def:CausalConvolution}) in the time domain is a product in the Laplace domain, so, using the Laplace transform of the $P/R$ functions (\ref{eq:Laplace_of_P_R}), the Laplace transform of the convolution of the $R$ functions is represented as:
\begin{equation}
\El[R_{\alpha,v_1}(\lambda,t)*R_{\beta,v_2}(\sigma,t)] (s) = \frac{s^{v_1+v_2}}{(s^\alpha-\lambda)(s^\beta-\sigma)} .\label{eq:convolution_RqRp}
\end{equation}
We will exploit this form to simplify these convolutions and represent them as a series of $R$ functions.
\subsection{\bf Convolution to series expansion}

\begin{theorem}
    For $\alpha,\beta>0;\,v_1<\alpha,v_2<\beta;\,w_1,w_2>-1$ (to ensure convergence of the Laplace transform), $\lambda, \sigma \in \C$
    \begin{equation}
        \begin{split}
            R_{\alpha,v_1}(\lambda,t) *R_{\beta,v_2}(\sigma,t) =& \sum_{k=0}^\infty \sigma ^k R_{\alpha,v-\beta(k+1)}(\lambda,t) =\sum_{j=0}^\infty \lambda^j R_{\beta,v-\alpha(j+1)}(\sigma,t)
            \\
            P_{\alpha,w_1}(\lambda,t)*P_{\beta,w_2}(\sigma,t) =&\sum_{k=0}^\infty \sigma ^k P_{\alpha,w+\beta k+1}(\lambda,t) = \sum_{j=0}^\infty \lambda^j P_{\beta,w+\alpha j+1}(\sigma,t)
        \end{split}
    \end{equation}
    where $v=v_1+v_2,$ and $w=w_1+w_2$, a convention which will be adopted throughout the rest of this paper.
\end{theorem}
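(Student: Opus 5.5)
The plan is to work entirely in the Laplace domain, starting from the product representation (\ref{eq:convolution_RqRp}). One factor is expanded as a geometric series, and the resulting series is then inverted term by term using Lemma \ref{lemma:Laplace_of_R_P}.

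\textbf{Step 1: the $R$ identity.} Write
\[
\frac{1}{s^\beta-\sigma} = s^{-\beta}\frac{1}{1-\sigma s^{-\beta}} = \sum_{k=0}^\infty \sigma^k s^{-\beta(k+1)},
\]
which converges for $\Re[s] > |\sigma|^{1/\beta}$, exactly as in the proof of Lemma \ref{lemma:Laplace_of_R_P}. Substituting into (\ref{eq:convolution_RqRp}) gives
\[
\El[R_{\alpha,v_1}(\lambda,t)*R_{\beta,v_2}(\sigma,t)](s) = \sum_{k=0}^\infty \sigma^k \frac{s^{v-\beta(k+1)}}{s^\alpha-\lambda}.
\]
By Lemma \ref{lemma:Laplace_of_R_P}, each summand is the transform of $R_{\alpha,v-\beta(k+1)}(\lambda,t)$. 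The admissibility condition $v-\beta(k+1)<\alpha$ holds automatically, since $v_2<\beta$ gives $v-\beta(k+1) < v_1 < \alpha$.

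Swapping the roles of the two factors, i.e. expanding $1/(s^\alpha-\lambda)$ instead, gives the second equality with $R_{\beta,v-\alpha(j+1)}(\sigma,t)$.

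\textbf{Step 2: the $P$ identity.} Use (\ref{eq:P_and_R}), or compute directly. The product of transforms is
\[
\frac{s^{\alpha-1-w_1}\, s^{\beta-1-w_2}}{(s^\alpha-\lambda)(s^\beta-\sigma)}.
\]
Expanding $1/(s^\beta-\sigma)$ produces the terms $\sigma^k s^{\alpha-1-(w+\beta k+1)}/(s^\alpha-\lambda)$. Each of these is the transform of $P_{\alpha,w+\beta k+1}(\lambda,t)$, and the requirement $w+\beta k+1>-1$ holds because $w>-2$. The symmetric expansion gives the other form.

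\textbf{Main obstacle: inverting term by term.} The substantive point is justifying that the inverse Laplace transform of the series equals the series of inverse transforms. I would handle it in the time domain rather than through an abstract inversion theorem.

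First, show that $\sum_k \sigma^k R_{\alpha,v-\beta(k+1)}(\lambda,t)$ converges absolutely and locally uniformly on $t>0$. Writing it as a double series gives
\[
\sum_{k,n}\frac{\sigma^k\lambda^n\, t^{\alpha(n+1)+\beta(k+1)-v-1}}{\Gamma(\alpha(n+1)+\beta(k+1)-v)}.
\]
The Gamma function in the denominator grows superexponentially in $n+k$, so this double series is dominated on bounded $t$-intervals, with at most an integrable singularity $t^{\alpha+\beta-v-1}$ at $t=0$.

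Second, with absolute convergence in hand, the double series is dominated by an exponentially bounded function. Dominated convergence (Fubini for sums and integrals) then lets the Laplace transform pass through the sum for $\Re[s]$ large, so the transform of the series equals the computed one.

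Finally, uniqueness of the Laplace transform for continuous functions on $t>0$ identifies the series with the convolution.

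Alternatively, one can expand both factors directly in their power series, convolve termwise using the Beta integral $t^a * t^b \propto t^{a+b+1}$, and regroup the absolutely convergent double sum. This gives the same identity and avoids Laplace inversion altogether.
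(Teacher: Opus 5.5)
Your proposal is correct and follows essentially the paper's route: pass to the Laplace domain via (\ref{eq:convolution_RqRp}), expand geometrically, and invert term by term. The paper expands both factors into a double series of monomials in $s$ and regroups after inversion, while you expand only $1/(s^\beta-\sigma)$ and invert each term directly as an $R$ or $P$ function via Lemma \ref{lemma:Laplace_of_R_P}. Your extra checks supply rigor the paper leaves implicit: that $v-\beta(k+1)<\alpha$ and $w+\beta k+1>-1$ keep every index admissible, and the Tonelli/Fubini justification of termwise inversion (or, alternatively, the Beta-integral computation in the time domain).
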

We use geometric expansion in $s$ and take term-by-term inverse Laplace transform to go back to the time domain, similarly to the steps used in the proof of Lemma \ref{lemma:Laplace_of_R_P}. 
\begin{proof}

By the Laplace convolution theorem (\ref{thm:ConvolutionTheorem} in the Appendix), the Laplace transform of $f*g$ results in a product $\El[f]\El[g]$. The Laplace representation of the convolution of the $R$ functions, $R_{\alpha,v_1}(\lambda,t)*R_{\beta,v_2}(\sigma,t)$, is $\frac{s^{v_1+v_2}}{(s^\alpha-\lambda)(s^\beta-\sigma)}$. As $\alpha,\beta>0$, after rewriting this as $s^{v-\alpha-\beta} (1-\lambda s^{-\alpha})^{-1}(1-\sigma s^{-\beta})^{-1}$, geometric expansion in both factors is valid  for $s > \max(|\lambda|^{1/\alpha},|\sigma|^{1/\beta})$:
    \begin{equation}
        s^{v-\alpha-\beta} (1-\lambda s^{-\alpha})^{-1}(1-\sigma s^{-\beta})^{-1}  = \sum_{j,k=0}^\infty \lambda^j \sigma^k s^{v-\alpha(j+1)-\beta(k+1)}.
    \end{equation}
Taking the inverse Laplace transform of each term leads to the following representation: 
\begin{align*}
    R_{\alpha,v_1}(\lambda,t)*R_{\beta,v_2}(\sigma,t) =& \sum_{j,k=0}^\infty \frac{\lambda^j \sigma^k t^{\alpha(j+1)+\beta(k+1)-v-1}}{\Gamma(\alpha(j+1)+\beta(k+1)-v)}
    \\
    =&\sum_{k=0}^\infty \sigma^k R_{\alpha,v-\beta(k+1)}(\lambda,t).
\end{align*}
By symmetry, this can also be expressed as $\sum_{j}^\infty \lambda^j R_{\beta,v-\alpha(j+1)}(\sigma,t)$. 

Similarly, because $\El[P_{\alpha,w}(\lambda,t)] = \frac{s^{\alpha-w-1}}{s^\alpha-\lambda}$, the Laplace transform of the convolution of $P$ functions $\El[P_{\alpha,w_1}(\lambda,t)*P_{\beta,w_2}(\sigma,t)]$ equals
\begin{align*}
   \El[P_{\alpha,w_1}(\lambda,t)*P_{\beta,w_2}(\sigma,t)]
    =&\frac{s^{\alpha+\beta-w-2}}{(s^\alpha-\lambda)(s^\beta-\sigma)}
   \\
    =& \sum_{j,k=0}^\infty \lambda^j \sigma^k s^{-\alpha j-\beta k-w-2}.
\end{align*}
Taking the inverse Laplace transform results in \begin{align*}
   P_{\alpha,w_1}(\lambda,t)*P_{\beta,w_2}(\sigma,t)= &\sum_{j,k=0}^\infty \lambda^j \sigma^k \frac{t^{\alpha j+\beta k+w +1}}{\Gamma(\alpha j+\beta k+w+2)}
    \\
    =&\sum_{k=0}^\infty \sigma^k P_{\alpha,w+\beta k+1}(\lambda,t)
\end{align*}

\end{proof}

Functions $R$ and $P$ are particular cases of the Mittag-Leffler function, so one might expect that the Mittag-Leffler function $E_{\alpha,\beta}(t)$ itself similarly has a simple identity for  convolutions of the form $E_{\alpha_1,\beta_1}(t)*E_{\alpha_2,\beta_2}(t)$. Generally, this is not the case because there is no simple Laplace transform when $E$ is not in the $P/R$ form. However, we can express the derived identity for the functions $P$ in terms of the Mittag-Leffler function:
\begin{equation}
    t^{w_1} E_{\alpha,w_1+1}(\lambda t^\alpha)* t^{w_2}E_{\beta,w_2+1}(\sigma t^\beta) = \sum_{k=0}^\infty \sigma^k t^{w+\beta k+1}E_{\alpha,w+\beta k+2}(\lambda t^\alpha ). \label{eq:E_expansion_inverse}
\end{equation}

These identities hold generally for any $\alpha,\beta>0$. With further constraints on $\alpha,\beta$, the infinite series of $P/R/E$ functions can be reduced to a finite sum of them. The strategy uses partial fraction decomposition in the Laplace domain, as suggested in chapter 2 of \cite{lorenzo2016_fractional_Trig_book}.

\subsection{\bf Convolution of the same-order functions}
 
A convolution of two $P/R$ functions whose orders are identical $\alpha=\beta$, can be simplified to a sum of two $P/R$ functions of order $\alpha$.

\begin{theorem} \label{thm:ConvolutionRqRq}
For $\lambda \neq \sigma$, $t>0$ and $\alpha>0,$  
$\alpha > v_k, ~ k=1,2,$ 
    \begin{equation}
        R_{\alpha,v_1}(\lambda, t) * R_{\alpha,v_2}(\sigma,t) = \frac{1}{\lambda-\sigma}\big(\lambda R_{\alpha,v-\alpha}(\lambda,t)-\sigma R_{\alpha,v-\alpha}(\sigma, t)\big) \label{eq:ConvolutionRqRq} 
    \end{equation}
   
    \begin{equation}
        P_{\alpha,w_1}(\lambda,t)*P_{\alpha,w_2}(\sigma,t) = \frac{1}{\lambda-\sigma}\bigg(\lambda P_{\alpha,w+1}(\lambda,t)-\sigma P_{\alpha,w+1}(\sigma,t) \bigg)
    \end{equation}

\begin{equation}\begin{split}
    &t^{w_1}E_{\alpha,w_1+1}(\lambda t^\alpha)*t^{w_2}E_{\alpha,w_2+1}(\sigma t^\alpha) \\&= \frac{1}{\lambda-\sigma}\bigg(  \lambda t^{w+1}E_{\alpha,w+2}(\lambda t^\alpha) - \sigma t^{w+1}E_{\alpha,w+2}(\sigma t^\alpha)\bigg).\end{split}
\end{equation}
Here $w_k>-1, ~ k=1,2,$ and
$w:= w_1+w_2,$ $v := v_1+v_2.$
\end{theorem}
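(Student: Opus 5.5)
The plan is to work entirely in the Laplace domain, using the partial-fraction idea from the introduction's exponential example. By the convolution theorem (Theorem \ref{thm:ConvolutionTheorem}) and Lemma \ref{lemma:Laplace_of_R_P}, the left side of \eqref{eq:ConvolutionRqRq} has transform $\frac{s^{v}}{(s^\alpha-\lambda)(s^\alpha-\sigma)}$, as in \eqref{eq:convolution_RqRp} with $\beta=\alpha$. Treating $u=s^\alpha$ as the variable, the algebraic identity
\[
\frac{1}{(u-\lambda)(u-\sigma)}=\frac{1}{\lambda-\sigma}\left(\frac{1}{u-\lambda}-\frac{1}{u-\sigma}\right)
\]
is valid for $\lambda\neq\sigma$. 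Taken alone, it gives $\frac{1}{\lambda-\sigma}\bigl(R_{\alpha,v}(\lambda,t)-R_{\alpha,v}(\sigma,t)\bigr)$. However, $R_{\alpha,v}$ need not be admissible, since $v=v_1+v_2$ may exceed $\alpha$. The stated form uses instead the variant
\[
\frac{u}{(u-\lambda)(u-\sigma)}=\frac{1}{\lambda-\sigma}\left(\frac{\lambda}{u-\lambda}-\frac{\sigma}{u-\sigma}\right),
\]
and I will use that one.

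Concretely, I will write $s^{v}=s^{\alpha}\,s^{v-\alpha}$, so the transform becomes
\[
s^{v-\alpha}\,\frac{s^\alpha}{(s^\alpha-\lambda)(s^\alpha-\sigma)}=\frac{1}{\lambda-\sigma}\left(\lambda\frac{s^{v-\alpha}}{s^\alpha-\lambda}-\sigma\frac{s^{v-\alpha}}{s^\alpha-\sigma}\right).
\]
Each piece is the transform of $R_{\alpha,v-\alpha}$ by Lemma \ref{lemma:Laplace_of_R_P}. That lemma requires $v-\alpha<\alpha$, which holds because $v_1,v_2<\alpha$. Inverting term by term, using linearity and uniqueness of the Laplace transform for continuous functions on $(0,\infty)$, gives \eqref{eq:ConvolutionRqRq}.

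For the $P$ identity I will either repeat the argument with transform $\frac{s^{2\alpha-w-2}}{(s^\alpha-\lambda)(s^\alpha-\sigma)}$, splitting off $s^\alpha$ to leave $s^{\alpha-(w+1)-1}$, which is the transform of $P_{\alpha,w+1}$ with $w+1>-1$, or translate via \eqref{eq:P_and_R}. Under \eqref{eq:P_and_R}, $w_k=\alpha-v_k-1$ gives $w+1=2\alpha-v-1=\alpha-(v-\alpha)-1$, so $P_{\alpha,w+1}=R_{\alpha,v-\alpha}$. The third identity is then just Definition \ref{def:P_func} written out.

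The main obstacle is justification rather than algebra. First, the partial-fraction identity must hold on a common half-plane; I will take $\Re s>\max(|\lambda|,|\sigma|)^{1/\alpha}$, where all transforms converge as in the proof of Lemma \ref{lemma:Laplace_of_R_P}. Second, inversion needs uniqueness. As a cross-check that avoids uniqueness issues, I would alternatively use the series theorem above with $\beta=\alpha$: writing $\sum_{j,k}\lambda^j\sigma^k t^{\alpha(j+k+2)-v-1}/\Gamma(\alpha(j+k+2)-v)$ and grouping by $n=j+k$ produces the coefficient $\sum_{j+k=n}\lambda^j\sigma^k=\frac{\lambda^{n+1}-\sigma^{n+1}}{\lambda-\sigma}$. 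Re-indexing with $m=n+1$ then matches the series of $\frac{1}{\lambda-\sigma}(\lambda R_{\alpha,v-\alpha}(\lambda,t)-\sigma R_{\alpha,v-\alpha}(\sigma,t))$ term by term, and absolute convergence justifies the regrouping.
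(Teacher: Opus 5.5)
Your proposal is correct and follows essentially the paper's proof: transform the convolution to $\frac{s^{v}}{(s^\alpha-\lambda)(s^\alpha-\sigma)}$, decompose in $s^\alpha$, and split off a factor $s^{\alpha}$ so that both pieces are transforms of admissible $R_{\alpha,v-\alpha}$ functions, then get the $P$ and $E$ versions from \eqref{eq:P_and_R} and the definition. The only difference is that the paper first writes the redundant form $\frac{1}{\lambda-\sigma}\bigl(R_{\alpha,v}(\lambda,t)-R_{\alpha,v}(\sigma,t)\bigr)$ and then rearranges with $\frac{x}{y-z}=\frac{x}{y}+\frac{zx/y}{y-z}$, which amounts to your direct decomposition of $\frac{u}{(u-\lambda)(u-\sigma)}$. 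One small caveat on your series cross-check: it sidesteps Laplace uniqueness only if you prove the series identity directly from $\frac{t^{a}}{\Gamma(a+1)}*\frac{t^{b}}{\Gamma(b+1)}=\frac{t^{a+b+1}}{\Gamma(a+b+2)}$ with term-by-term convolution. The paper's series theorem is itself proved by termwise inverse Laplace transform.
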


\begin{proof}
   As noted in \ref{eq:convolution_RqRp}, the Laplace transform of the convolution $R_{\alpha ,v_1}(\lambda , t) * R_{\alpha,v_2}(\sigma, t)$ equals to $\frac{s^{v}}{(s^{\alpha} - \lambda)(s^\alpha - \sigma)}$. By assumption, $\lambda \neq \sigma$, then, using the partial fraction decomposition we have: 
   \begin{equation}
    \frac{s^v}{(s^\alpha-\lambda)(s^\alpha-\sigma)} = \frac{1}{\lambda-\sigma} \left(\frac{s^v}{s^\alpha-\lambda} - \frac{s^v}{s^\alpha-\sigma}\right).
\end{equation} 
Bringing the right hand side back to the time domain results in: 
\begin{equation}
    R_{\alpha,v_1}(\lambda,t)*R_{\alpha,v_2}(\sigma,t)  = \frac{1}{\lambda-\sigma}\big( R_{\alpha,v}(\lambda,t)-R_{\alpha,v}(\sigma,t)\big)\label{eq:Rq_Rq_redundant}
\end{equation}
which is a simple form worth calling attention to (it will be employed in the Section \ref{subsec:cwk_model} of the paper). Note, that it has a slight redundancy as the first term in the  $R$'s series expansion cancels out. Additionally, though $v_1,v_2<\alpha$, it could happen that $v=v_1+v_2 \geq \alpha$, which would violate a constraint in Definition \ref{def:R_func} requiring that $v<\alpha$ for $R_{\alpha,v}$. These issues can be resolved through the following algebraic rearrangement: for any $x,y\neq z, \frac{x}{y-z} =\frac{x}{y}  + \frac{zx/y}{y-z}$. Therefore \begin{equation}
        \frac{s^{v}}{(s^{\alpha} - \lambda) } -  \frac{s^{v}}{ (s^\alpha - \sigma)} = s^{v-\alpha} + \lambda \frac{s^{v-\alpha}}{s^\alpha-\lambda} - s^{v-\alpha} - \sigma \frac{s^{v-\alpha}}{s^\alpha-\sigma}.
    \end{equation}
     Thus \begin{equation}
          \frac{s^v}{(s^\alpha-\lambda)(s^\alpha-\sigma)} = \frac{1}{\lambda-\sigma}\left ( \lambda \frac{s^{v-\alpha}}{s^\alpha-\lambda} - \sigma  \frac{s^{v-\alpha}}{s^\alpha-\sigma}\right),
    \end{equation}
in time domain this corresponds to
\begin{equation*}
  R_{\alpha,v_1}(\lambda,t)*R_{\alpha,v_2}(\sigma,t) =  \frac{1}{\lambda-\sigma}\bigg(\lambda R_{\alpha,v-\alpha}(\lambda,t)-\sigma R_{  \alpha,v-\alpha}(\sigma,t)\bigg).
\end{equation*} 

We obtain similar results for $P$ functions,  using Equation \ref{eq:P_and_R}, \begin{align*}
    &P_{\alpha,w_1}(\lambda,t)*P_{\alpha,w_2}(\sigma,t) \\=& R_{\alpha,\alpha-w_1-1}(\lambda,t)* R_{\alpha,\alpha-w_2-1}(\sigma,t)
    \\
    =& \frac{1}{\lambda-\sigma}(\lambda R_{\alpha,\alpha-(w_1+w_2+1)-1}(\alpha,t)-\sigma R_{\alpha,\alpha-(w_1+w_2+1)-1}(\sigma,t) )
    \\
    =& \frac{1}{\lambda-\sigma}\big( \lambda P_{\alpha,w+1}(\lambda,t) - \sigma P_{\alpha,w+1}(\sigma,t) \big).
\end{align*}
Finally, to express this identity in terms of Mittag-Leffler functions we substitute $P_{\alpha,w}(\lambda,t) = t^w E_{\alpha,w+1}(\lambda t^\alpha )$, leading to 
\begin{equation}\begin{split}
    &t^{w_1}E_{\alpha,w_1+1}(\lambda t^\alpha) * t^{w_2} E_{\alpha,w_2+1}(\sigma,t)     \\ &= \frac{t^{w+1}}{\lambda-\sigma}( \lambda E_{\alpha,w+2}(\lambda t^\alpha) - \sigma E_{\alpha,w+2}(\sigma t^\alpha) )\end{split}
\end{equation}
as claimed.
\end{proof}

We extend these results to the case when $\lambda=\sigma.$
\begin{theorem} \label{thm:ConvolutionRqaRqa}
For $t>0$, $\lambda \in \C,$ and 
$\alpha>0,$  $\alpha > v_k, ~ k=1,2,$ with $v = v_1 + v_2$
    \begin{equation}
        R_{\alpha,v_1}(\lambda,t)*R_{\alpha,v_2}(\lambda,t) = \frac{t}{\alpha}R_{\alpha,v+1-\alpha}(\lambda,t) + \frac{v+1-\alpha}{\alpha}R_{\alpha,v-\alpha}(\lambda,t).
    \end{equation}
    For $w_1,w_2>-1$, with $w = w_1 + w_2$
  \begin{equation} \label{eq:repeated_lambda_P_conv}  \begin{split}
        P_{\alpha,w_1}(\lambda,t)*P_{\alpha,w_2}(\lambda,t) =& \frac{t}{\alpha}P_{\alpha,w}(\lambda,t) + \frac{\alpha-w-1}{\alpha} P_{\alpha,w+1}(\lambda,t)
    \end{split}\end{equation}

\begin{equation}\label{eq:repeated_lambda_E_conv}
    \begin{split}
        &(t^{w_1} E_{\alpha,w_1+1}(\lambda t^\alpha) )* (t^{w_2} E_{\alpha,w_2+1}(\lambda t^\alpha) )
     \\ =& \frac{1}{\alpha} t^{w+1}E_{\alpha,w+1}(\lambda t^\alpha) + \frac{\alpha-w-1}{\alpha}t^{w+1} E_{\alpha,w+2}(\lambda t^\alpha).
    \end{split}
\end{equation}
    
\end{theorem}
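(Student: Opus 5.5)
The identity follows from the Laplace transform of the convolution,
\[
\El[R_{\alpha,v_1}(\lambda,t)*R_{\alpha,v_2}(\lambda,t)](s)=\frac{s^v}{(s^\alpha-\lambda)^2},
\]
which we get from \ref{eq:convolution_RqRp} with $\beta=\alpha$, $\sigma=\lambda$. Partial fractions with distinct poles are not available here, so we work with a squared denominator instead. That squared denominator comes from differentiating $\frac{s^{v'}}{s^\alpha-\lambda}$ with respect to $s$, and differentiation in $s$ corresponds to multiplication by $-t$ in the time domain. This is the standard property $\El[tf](s)=-\frac{d}{ds}\El[f](s)$, which we take from (or add to) the Appendix.

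First we compute the $s$-derivative of the transform of a single $R$ function. For any admissible index $u$,
\[
-\frac{d}{ds}\frac{s^u}{s^\alpha-\lambda} = -\frac{u s^{u-1}}{s^\alpha-\lambda}+\frac{\alpha s^{u+\alpha-1}}{(s^\alpha-\lambda)^2}.
\]
Choosing $u=v+1-\alpha$ gives $s^{u+\alpha-1}=s^v$. Rearranging, we obtain
\[
\frac{s^v}{(s^\alpha-\lambda)^2}=\frac{1}{\alpha}\Big(-\frac{d}{ds}\frac{s^{u}}{s^\alpha-\lambda}\Big)+\frac{u}{\alpha}\frac{s^{u-1}}{s^\alpha-\lambda}.
\]
Applying Lemma \ref{lemma:Laplace_of_R_P} and the $t$-multiplication rule, the first term inverts to $\frac{t}{\alpha}R_{\alpha,v+1-\alpha}(\lambda,t)$ and the second to $\frac{v+1-\alpha}{\alpha}R_{\alpha,v-\alpha}(\lambda,t)$. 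This is exactly the claimed identity.

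The $P$ version follows by the substitution \ref{eq:P_and_R}. With $v_k=\alpha-w_k-1$ we have $v=2\alpha-w-2$, so $v+1-\alpha=\alpha-w-1$ and $v-\alpha=\alpha-w-2$. Hence $R_{\alpha,v+1-\alpha}=P_{\alpha,w}$, $R_{\alpha,v-\alpha}=P_{\alpha,w+1}$, and the coefficient $v+1-\alpha$ equals $\alpha-w-1$. The $E$ form then comes from writing $P_{\alpha,w}(\lambda,t)=t^wE_{\alpha,w+1}(\lambda t^\alpha)$, so that $tP_{\alpha,w}=t^{w+1}E_{\alpha,w+1}$.

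The main obstacle is justifying the derivative rule in this setting. The indices $u=v+1-\alpha$ and $u-1$ must be admissible, i.e. less than $\alpha$. This holds since $v<2\alpha$ gives $u<\alpha$, and then $u-1<\alpha$ as well. The $t$-multiplication property must also hold for functions with power singularities at $0$. Here we would argue directly on the series. Each term of $R_{\alpha,u}$ is a power $t^{\gamma}/\Gamma(\gamma+1)$ with $\gamma>-1$, and multiplication by $t$ visibly matches differentiating $s^{-\gamma-1}$. Termwise differentiation of the geometric series in $s$ is valid for $\Re s>|\lambda|^{1/\alpha}$, as in the proof of Lemma \ref{lemma:Laplace_of_R_P}.

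As an alternative check, we can compare double series directly. The coefficient of $\lambda^n$ in the convolution is $(n+1)t^{\alpha(n+2)-v-1}/\Gamma(\alpha(n+2)-v)$. Writing $n+1=\frac{1}{\alpha}\big((\alpha(n+2)-v-1)-(v+1-\alpha)\big)$ splits this into the two claimed series terms. This argument avoids the analytic issue entirely.
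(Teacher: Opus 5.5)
Your main argument is the paper's own proof. You differentiate $s^{u}/(s^\alpha-\lambda)$ with $u=v+1-\alpha$, use $\El[tf]=-\frac{d}{ds}\El[f]$, rearrange, and pass to $P$ and $E$ via \ref{eq:P_and_R}. The algebra and both reductions are correct, and your $u=v+1-\alpha$ is the right choice (the paper's text has the typo $v'=v-\alpha-1$). Two slips in your supporting remarks need repair.

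First, $v<2\alpha$ only gives $u<\alpha+1$, not $u<\alpha$. In fact $u<\alpha$ if and only if $v<2\alpha-1$, equivalently $w>-1$ in the $P$ form. For instance, $\alpha=\tfrac12$ and $v_1=v_2=\tfrac25$ give $u=\tfrac{13}{10}$.

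For $u>\alpha$, $R_{\alpha,u}$ lies outside Definition \ref{def:R_func}. Its leading term $t^{\alpha-u-1}/\Gamma(\alpha-u)$ has exponent below $-1$ and nonzero coefficient. So $R_{\alpha,u}$ itself has no Laplace transform, and your claim that every term has $\gamma>-1$ fails.

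The repair is that you never need $\El[R_{\alpha,u}]$, only $\El[tR_{\alpha,u}]$. Since $u<\alpha+1$, every exponent satisfies $\gamma>-2$, and termwise
\[
\El\Big[\frac{t^{\gamma+1}}{\Gamma(\gamma+1)}\Big](s)=(\gamma+1)s^{-\gamma-2}=-\frac{d}{ds}s^{-\gamma-1}
\]
holds for all $\gamma>-2$ (both sides vanish at $\gamma=-1$). The paper applies Theorem \ref{thm:Derivative_in_Laplace_space} at this point without addressing the issue, so your corrected version is more careful than the original.

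Second, the sign in your double-series check is wrong. It should read $n+1=\frac1\alpha\big((\alpha(n+2)-v-1)+(v+1-\alpha)\big)$. Set $G=\alpha(n+2)-v$ and use $(G-1)\frac{t^{G-1}}{\Gamma(G)}=\frac{t^{G-1}}{\Gamma(G-1)}$. Then the two pieces are exactly the coefficients of $\lambda^n$ in $\frac{t}{\alpha}R_{\alpha,v+1-\alpha}$ and $\frac{v+1-\alpha}{\alpha}R_{\alpha,v-\alpha}$.

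Once corrected, this is a genuinely different and fully elementary proof that the paper does not give. It needs only the Beta integral for convolving powers and termwise convolution under absolute convergence. It also sidesteps the admissibility issue altogether, so it is worth presenting as the primary rigorous argument.
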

\begin{proof}
By the Laplace Convolution theorem (Theorem \ref{thm:ConvolutionTheorem}), the convolution of $R_{\alpha,v_1}(\lambda,t)$ with $R_{\alpha,v_2}(\lambda,t)$ is given by
 \begin{equation}
    \El\left[R_{\alpha,v_1}(\lambda,t)*R_{\alpha,v_2}(\lambda,t)\right] = \frac{s^{v_1+v_2}}{(s^\alpha-\lambda)^2} \label{eq:repeated_eigenvalue}
    \end{equation}
in Laplace space. 
To bring this to the time domain, we use the fact that derivatives in the Laplace domain correspond to multiplication by $(-t)$ in the time domain:
\begin{equation}
\El[t g(t)] = -\frac{d}{ds}\El[g](s) \label{eq:t_and_dds}
\end{equation}
(Theorem \ref{thm:Derivative_in_Laplace_space} in Appendix). Applying this to equation \ref{eq:repeated_eigenvalue} gives that
\begin{align*}
    \El[tR_{\alpha,v'}(\lambda,t)](s) 
    =& -\frac{d}{ds}\frac{s^{v'}}{s^{\alpha}-\lambda}
    \\
    =& - v'\frac{s^{v'-1}}{s^{\alpha}-\lambda} + \frac{s^{v'}}{(s^{\alpha}-\lambda)^2}\alpha s^{\alpha-1}.
\end{align*}
Letting $v' = v-\alpha-1$ ( so that $v'+\alpha-1 = v$), and rearranging, we obtain
\begin{equation}
    \frac{s^v}{(s^\alpha-\lambda)^2} = \frac{v-\alpha+1}{\alpha}\frac{s^{v-\alpha}}{s^{\alpha}-\lambda} + \frac{1}{\alpha}\El[t R_{\alpha,v-\alpha+1}(\lambda,t)](s).
\end{equation}
Returning this result to the time domain, the convolution of $R$ functions is represented as 
   \begin{equation}
   \begin{split}
    R_{\alpha,v_1}(\lambda,t)*R_{\alpha,v_2}(\lambda,t) &= \El^{-1} \left[\frac{s^v}{(s^\alpha-\lambda)^2}\right] 
    \\&= \frac{v-\alpha+1}{\alpha}R_{\alpha,v-\alpha}(\lambda,t) + \frac{t}{\alpha}R_{\alpha,v-\alpha+1}(\lambda,t).
   \end{split}
 \end{equation} 
The identities \ref{eq:repeated_lambda_P_conv} for functions $P$ are obtained from these results using Equation \ref{eq:P_and_R}. 
The remaining identities \ref{eq:repeated_lambda_E_conv} follow if we express the $P$ functions in terms of Mittag-Leffler $E$ functions.

\end{proof}

\textbf{ Remark.}
To obtain the identities corresponding to the convolution of the Mittag-Leffler functions of different types, say, for the case of an $R$ function convolved with a function $P$, first, one of the functions can be converted into the other form using Equation \ref{eq:P_and_R}.  


\subsection{\bf Generalization of Euler's identity}
As this class of functions ($R_\alpha$ or $P_\alpha$) is characterized as eigenfunctions of fractional derivatives $D^{\alpha}$, it is instructive to consider the notable examples for $\alpha = 1,2$.
Exponentials are the eigenfunctions of $D^1$, while $\sin,\cos$ or $\sinh,\cosh$ are the eigenfunctions for $D^2$, depending on the sign of the eigenvalue. Euler's formula gives an important relation between the eigenfunctions of $D^1$ and those of $D^2$. \begin{align*}
\exp(i \omega t) =& \cos(\omega t) + i \sin(\omega t)
\\
    \exp(\eta t)=& \cosh(\eta t)+ \sinh( \eta t)
\end{align*}
We can formulate these relations in terms of the Mittag-Leffler function using $E_{1,1}(z) = e^z$, $E_{2,1}(z) = \cosh(\sqrt{z})$, $E_{2,2}(z) = \frac{\sinh(\sqrt{z})}{\sqrt z}$. Hence, Euler's identity for Mittag-Leffler functions is represented as
\begin{equation}
    \begin{split} \label{eq:eulers_identity_E}
        E_{1,1}(z ) =&  E_{2,1}(z^2) + z E_{2,2}(z^2), \qquad z\in\C.
    \end{split}
\end{equation}
 We now demonstrate that this result has a further generalization for Mittag-Leffler functions (for another expression of the following relations, see \cite{LorenzoHartley2000_RFunction_relationships} and chapter 4 of \cite{lorenzo2016_fractional_Trig_book}.):

\begin{theorem} \label{thm:expansion_identity}

For $N \in \Z_{>0}$, $\alpha>0$, $w>-1$, $\beta>0$, $v<\alpha,$  
the following generalizations of Euler's relation hold:
    \begin{align}
    E_{\alpha,\beta}(z)  =&\sum_{\ell =0}^{N-1} z^\ell E_{N\alpha, \beta + \ell\alpha}(z^N) , 
    \qquad  z\in \C   
    \label{eq:E_expansion}
    \\
    P_{\alpha,w}(\lambda,t) =&\sum_{\ell=0}^{N-1} \lambda^\ell P_{N\alpha,w+\ell \alpha}(\lambda^N,t), 
    \qquad \lambda \in \C, \quad  t>0
    \label{eq:P_expansion}
    \\
    R_{\alpha,v}(\lambda,t) =& \sum_{\ell=0}^{N-1} \lambda^{N-\ell-1}R_{N\alpha,v+\alpha\ell}(\lambda^N,t) ,   \qquad \lambda \in \C, \quad  t>0 
    \label{eq:R_expansion}
\end{align}
\end{theorem}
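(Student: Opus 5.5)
The plan is to prove \eqref{eq:E_expansion} directly from the series in Definition \ref{def:mittag_leffler_2_param}, and then obtain \eqref{eq:P_expansion} and \eqref{eq:R_expansion} from it. The approach is to sort the summation index by its residue modulo $N$. Write every $n\ge 0$ uniquely as $n = Nm+\ell$ with $m\ge 0$ and $0\le \ell\le N-1$. Since the series for $E_{\alpha,\beta}(z)$ converges absolutely for every $z\in\C$ (the Gamma function in the denominator dominates), we may split it into these $N$ subseries without changing the value:
\[
E_{\alpha,\beta}(z) = \sum_{\ell=0}^{N-1}\sum_{m=0}^\infty \frac{z^{Nm+\ell}}{\Gamma(\alpha(Nm+\ell)+\beta)} = \sum_{\ell=0}^{N-1} z^\ell \sum_{m=0}^\infty \frac{(z^N)^m}{\Gamma(N\alpha\, m + \beta+\ell\alpha)} .
\]
The inner sum is exactly $E_{N\alpha,\beta+\ell\alpha}(z^N)$. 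Its parameters are admissible, because $N\alpha>0$ and $\beta+\ell\alpha>0$. This gives \eqref{eq:E_expansion}.

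For \eqref{eq:P_expansion}, I apply \eqref{eq:E_expansion} with $z=\lambda t^\alpha$ and $\beta = w+1>0$, and then multiply by $t^w$:
\[
P_{\alpha,w}(\lambda,t)=\sum_{\ell=0}^{N-1}\lambda^\ell t^{w+\ell\alpha}E_{N\alpha,w+\ell\alpha+1}(\lambda^N t^{N\alpha}) = \sum_{\ell=0}^{N-1}\lambda^\ell P_{N\alpha,w+\ell\alpha}(\lambda^N,t).
\]
The last step uses Definition \ref{def:P_func}, which is valid since $w+\ell\alpha>-1$.

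For \eqref{eq:R_expansion}, one option is to use \eqref{eq:P_and_R}: write $R_{\alpha,v}=P_{\alpha,\alpha-v-1}$, expand, and convert each term back via $P_{N\alpha,w'}=R_{N\alpha,N\alpha-w'-1}$. This produces $\sum_\ell \lambda^\ell R_{N\alpha,\,v+(N-1-\ell)\alpha}(\lambda^N,t)$. Reindexing with $\ell\mapsto N-1-\ell$ then gives the stated form with coefficient $\lambda^{N-\ell-1}$. One has to check that $v+\ell\alpha<N\alpha$ for $\ell\le N-1$, which follows from $v<\alpha$.

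As a cross-check, I would also verify the $R$ identity in the Laplace domain using Lemma \ref{lemma:Laplace_of_R_P}. The right-hand side transforms to $\frac{s^v\sum_{\ell}\lambda^{N-1-\ell}s^{\alpha\ell}}{s^{N\alpha}-\lambda^N}$. Since $s^{N\alpha}-\lambda^N=(s^\alpha-\lambda)\sum_\ell s^{\alpha\ell}\lambda^{N-1-\ell}$, this equals $\frac{s^v}{s^\alpha-\lambda}$. The time-domain functions then agree by uniqueness of the Laplace transform.

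I do not expect a serious obstacle here. The only points needing care are the justification for regrouping the series, which is absolute convergence, and the index bookkeeping and parameter constraints in the $R$ case, where the reversal of $\ell$ is easy to get wrong.
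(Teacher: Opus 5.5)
Your proposal is correct and follows essentially the same route as the paper: you split the Mittag-Leffler series by residue classes modulo $N$ (justified by absolute convergence), substitute $z=\lambda t^\alpha$ with the appropriate $\beta$, and reverse the index for the $R$ identity. The differences are cosmetic. You reach the $R$ case through the $P$--$R$ relation rather than directly from the $E$ expansion. Your Laplace-domain check via $s^{N\alpha}-\lambda^N=(s^\alpha-\lambda)\sum_{\ell}s^{\alpha\ell}\lambda^{N-1-\ell}$ is a valid extra confirmation that the paper does not include.
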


\begin{proof}
    We initially derive \ref{eq:E_expansion} via the series representation of Mittag-Leffler functions, and then use the result to derive \ref{eq:P_expansion} and \ref{eq:R_expansion}.

    Any $j\in \Z_{\geq0}$ can be uniquely represented as $j=N k + \ell$ for $k\in\Z_{\geq 0}$, $\ell\in \{0,\cdots,N-1\}$. Hence, any unconditionally convergent series indexed by $j\in\Z_{\geq 0}$ (including the series defining the Mittag-Leffler function) can be rearranged as sums over $k\in\Z_{\geq 0}$ and  $\ell\in \{0,\cdots,N-1\}$:
\begin{equation}
    \sum _{j=0}^\infty b_j = \sum_{k=0}^{\infty} \left(b_{(Nk)} + b_{(Nk +1)} + \ldots+ b_{(Nk + N-1)} \right)  =  \sum_{\ell=0}^{N-1} \sum_{k=0}^\infty b_{(Nk+\ell)}.
\end{equation}

Applying this to the Mittag-Leffler series expansions, we obtain
\begin{align*}
    E_{\alpha,\beta}(z) =& \sum_{j=0}^\infty \frac{z^j}{\Gamma(\alpha j + \beta)}
     = \sum_{\ell=0}^{N-1} \sum_{k=0}^\infty \frac{z^{N k + \ell}}{\Gamma(\alpha (N k + \ell) + \beta)}
    \\ =& \sum_{\ell=0}^{N-1} z^\ell \sum_{k=0}^\infty \frac{z^{N k}}{\Gamma(\alpha N k + (\alpha \ell + \beta))}
    = \sum_{\ell=0}^{N-1} z^\ell E_{N\alpha,\beta+\alpha\ell}(z^N)
\end{align*}
as stated. By definition, $P_{\alpha,w}(\lambda,t) = t^w E_{\alpha,w+1}(\lambda t^\alpha)$, thus
\begin{align*}
    t^w E_{\alpha,w+1}(\lambda t^\alpha)
    =& \sum_{\ell=0}^{N-1} \lambda^\ell t^{\alpha \ell + w} E_{N\alpha,w+1+\alpha\ell}(\lambda^N t^{N\alpha})
    \\
    =& \sum_{\ell=0}^{N-1} \lambda^\ell P_{N\alpha,w+\alpha\ell}(\lambda^N,t)
\end{align*}
and $R_{\alpha,v}(\lambda,t) = t^{\alpha-v-1}E_{\alpha,\alpha-v}(\lambda t^\alpha)$ and so \begin{align*}
    t^{\alpha-v-1}E_{\alpha,\alpha-v}(\lambda t^\alpha)
    =&
    \sum_{\ell=0}^{N-1} \lambda^\ell t^{\alpha-v-1+\alpha \ell} E_{N\alpha,\alpha-v+\ell \alpha}(\lambda^N t^{N\alpha})
    \\
    =&\sum_{\ell=0}^{N-1} \lambda^\ell R_{N\alpha,v-\alpha(\ell+1-N)}(\lambda^N,t)
    \\
    =& \sum_{k=0}^{N-1} \lambda^{N-k-1} R_{N\alpha,v+\alpha k}(\lambda^N,t)
\end{align*}
where in the last sum we reversed the index $\ell=N-k-1.$
\end{proof}
Now we can see that Euler's identity \ref{eq:eulers_identity_E} is an instance of \ref{eq:E_expansion} for $N=2$. Similarly, Euler's identity written in terms of $P$ or $R$ functions takes the form \ref{eq:P_expansion} or \ref{eq:R_expansion} for $N=2.$

Additionally, there are the inverse identities representing trigonometric and hyperbolic functions using exponentials. For instance, 
\[
\frac{1}{2}(e^{+i \omega t}+ e^{- i\omega t }) =\cos(\omega t) .
\]
The next theorem generalizes these relations for Mittag-Leffler functions and $N>2.$

\begin{theorem} \label{thm:Expansion_reverse}
    Let $\zeta$ be a nontrivial $N$-th root of unity for $N \in \Z_{>0}$. For $\ell \in \Z,$ $ 0\leq \ell< N$, we have
    \begin{align} 
        \frac {1}{N}\sum_{k=0}^{N-1} (\zeta^k )^{-\ell} E_{\alpha ,\beta}(\zeta^k z)=&
        z^m E_{N\alpha,\beta+\alpha \ell}(z^N), \quad z\in\C \label{eq:E_inverse_expansion}
        \\ \frac{1}{N} \sum_{k=1}^{N} (\lambda \zeta^k)^{-\ell} P_{\alpha,w}(\lambda\zeta^k,t)  = &  P_{N\alpha,w+\alpha \ell }(\lambda^N,t), \quad \lambda \in \C, \quad t>0, \quad w>-1 
        \label{eq:P_inverse_expansion}
        \\ \frac{1}{N} \sum_{k=1}^{N} (\lambda \zeta^k)^{N-1-\ell} R_{\alpha,v}(\lambda\zeta^k,t)  =&   R_{N\alpha,v+\alpha \ell}(\lambda^N,t), \quad \lambda \in \C,\quad  t>0, \quad v<\alpha \label{eq:R_inverse_expansion}
    \end{align}

\end{theorem}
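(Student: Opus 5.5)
The plan is the discrete Fourier (roots-of-unity) filter applied termwise to the Mittag-Leffler series. This is the exact inverse of the regrouping $j=Nk+\ell$ used in the proof of Theorem \ref{thm:expansion_identity}. Throughout I take $\zeta$ to be a \emph{primitive} $N$-th root of unity. This is needed: for a merely nontrivial root, e.g. $\zeta=-1$ with $N=4$, the filter below fails. I also read the right side of \eqref{eq:E_inverse_expansion} as $z^\ell E_{N\alpha,\beta+\alpha\ell}(z^N)$, i.e. $m=\ell$. The one fact needed is the orthogonality relation
\[
\frac1N\sum_{k=0}^{N-1}\zeta^{kr}=\begin{cases}1,& r\equiv 0 \pmod N,\\ 0,& \text{otherwise,}\end{cases}
\]
which follows from the geometric sum $\frac{\zeta^{Nr}-1}{\zeta^r-1}=0$ when $\zeta^r\neq1$. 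Since $\zeta^N=1$, every sum below may be taken over $k=1,\dots,N$ or over $k=0,\dots,N-1$ interchangeably.

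\emph{Step 1, identity \eqref{eq:E_inverse_expansion}.} Insert the series of Definition \ref{def:mittag_leffler_2_param} and interchange the finite $k$-sum with the absolutely convergent $j$-sum:
\[
\frac1N\sum_{k}\zeta^{-k\ell}E_{\alpha,\beta}(\zeta^k z)=\sum_{j\ge0}\frac{z^j}{\Gamma(\alpha j+\beta)}\cdot\frac1N\sum_k\zeta^{k(j-\ell)}.
\]
By orthogonality only the indices $j=Nn+\ell$ survive, because $0\le\ell<N$. Factoring out $z^\ell$ leaves $\sum_n z^{Nn}/\Gamma(N\alpha n+\beta+\alpha\ell)=E_{N\alpha,\beta+\alpha\ell}(z^N)$.

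\emph{Step 2, identity \eqref{eq:P_inverse_expansion}.} Apply Step 1 with $\beta=w+1$ and $z=\lambda t^\alpha$, then multiply by $t^w\lambda^{-\ell}$. This requires $\lambda\neq0$ when $\ell>0$. Since $(\lambda\zeta^k)^{-\ell}t^wE_{\alpha,w+1}(\lambda\zeta^kt^\alpha)=\lambda^{-\ell}\zeta^{-k\ell}P_{\alpha,w}(\lambda\zeta^k,t)$, the left side becomes the $P$-sum. The right side becomes $\lambda^{-\ell}t^w(\lambda t^\alpha)^\ell E_{N\alpha,w+1+\alpha\ell}(\lambda^Nt^{N\alpha})=P_{N\alpha,w+\alpha\ell}(\lambda^N,t)$ by Definition \ref{def:P_func}.

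\emph{Step 3, identity \eqref{eq:R_inverse_expansion}.} This is where I expect the only real obstacle, namely matching the power of $\lambda\zeta^k$ to the correct residue class. One route is to convert $R$ into $P$ via \eqref{eq:P_and_R} and reuse Step 2. I would rather filter the series of Definition \ref{def:R_func} directly. Write
\[
R_{\alpha,v}(\mu,t)=\sum_j \mu^j t^{\alpha(j+1)-v-1}/\Gamma(\alpha(j+1)-v),
\]
and multiply by a power $(\lambda\zeta^k)^{p}$. After averaging over $k$, exactly the indices $j\equiv -p\pmod N$ survive. The target $R_{N\alpha,v+\alpha\ell}(\lambda^N,t)$ has $t$-exponents $N\alpha(n+1)-\alpha\ell-v-1$, so one must keep $j=Nn+N-1-\ell$. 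This forces $p\equiv \ell+1\pmod N$. The $\lambda$-powers then combine to $\lambda^{j+p}$, and this must equal $\lambda^{Nn}$. Hence the weight to use is $(\lambda\zeta^k)^{\ell+1-N}$, which equals $\lambda^{-N}(\lambda\zeta^k)^{\ell+1}$ because $\zeta^{kN}=1$. With this weight the surviving terms are precisely
\[
\sum_n \lambda^{Nn}t^{N\alpha(n+1)-(v+\alpha\ell)-1}/\Gamma(N\alpha(n+1)-(v+\alpha\ell)),
\]
which is $R_{N\alpha,v+\alpha\ell}(\lambda^N,t)$.

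The exponent $N-1-\ell$ printed in \eqref{eq:R_inverse_expansion} selects a different residue class. For the stated right-hand side to hold, I would need to reconcile that exponent with the bookkeeping above: either the printed exponent should be replaced by $\ell+1-N$, or the index on the right relabelled, analogously to the reversal $\ell\mapsto N-1-\ell$ at the end of the proof of Theorem \ref{thm:expansion_identity}. As a consistency check I would verify the result against \eqref{eq:R_expansion}, by substituting it back and confirming that the sum over $\ell$ reproduces $R_{\alpha,v}$.
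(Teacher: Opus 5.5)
Your proposal is correct and is essentially the paper's argument. The same roots-of-unity filter on the Mittag-Leffler series gives the $E$ identity, and the same substitution $z=\lambda t^\alpha$, $\beta=w+1$ gives the $P$ identity. The only difference is in the $R$ identity: you filter the $R$-series directly, whereas the paper writes $R_{\alpha,v}=P_{\alpha,\alpha-v-1}$ and applies the $P$ identity with index $N-1-\ell$, which yields exactly your weight $(\lambda\zeta^k)^{\ell+1-N}$. The paper's own proof uses the exponent $m+1-N$, with $m$ in the role of your $\ell$, which confirms that the printed exponent $N-1-\ell$ and the factor $z^m$ are misprints. Its orthogonality step also tacitly requires $\zeta$ to be primitive, as you say.

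Drop the alternative of merely relabelling the index, though. With the printed weight $(\lambda\zeta^k)^{N-1-\ell}$, the filter returns $\lambda^N R_{N\alpha,v+\alpha(N-2-\ell)}(\lambda^N,t)$ for $\ell<N-1$. So only the exponent correction, or equivalently the weight $\lambda^{-N}(\lambda\zeta^k)^{\ell+1}$, gives the stated right-hand side.
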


\begin{proof} 
We start with writing the left hand side of \ref{eq:E_inverse_expansion}  in the following form: 
    \begin{align*}
        &       \frac{1}{N}\sum_{\ell=0}^{N-1} (\zeta^\ell  )^{-m} E_{\alpha ,\beta}(\zeta^\ell z)
        \\
        & =   \frac{1}{N} \sum_{j=0}^\infty  \left(\sum_{\ell=0}^{N-1}(\zeta^\ell)^{j-m} \right)   \frac{z^{j }  }{\Gamma(\beta+\alpha j)}
    \end{align*}
As $\zeta$ is an $N$-th root of unity, 
    the inner sum \( \sum_{\ell=0}^{N-1} \zeta^{\ell(j - m)} \) is equal to \( N \) when \( j - m \) is a multiple of \( N \), and zero otherwise. Thus, the only nonzero terms in the outer sum correspond to \( j-m = Ni  \) for \( i \in \Z_{\geq0} \), so the sum simplifies to
    \begin{align*}
        &= \frac{N}{N}  \sum_{i=0}^\infty \frac{z^{Ni+m}}{\Gamma(\beta+m \alpha +N\alpha i)}
        \\&= z^mE_{N\alpha,\beta+m\alpha}(z^N)
    \end{align*}
as claimed. The identities for $P$ functions are obtained by representing them in terms of Mittag-Leffler functions.
\begin{align*}
\frac{1}{N} \sum_{k=1}^{N} (\lambda \zeta^k)^{-m} P_{\alpha,w}(\lambda\zeta^k,t)  
        =&  \frac{1}{N} \sum_{k=1}^{N} (\lambda \zeta^k)^{-m} t^{w} E_{\alpha,w+1}( \lambda \zeta^k t^\alpha)
    \\  =& \lambda^{-m}t^w (\lambda t^\alpha)^m E_{N\alpha,w+m\alpha+1}(\lambda^N t^{N\alpha})
    \\  =& P_{N\alpha,w+m\alpha}(\lambda^N,t)
\end{align*}
Using this result and \ref{eq:P_and_R}, the identities for $R$ functions are the following: 
\begin{align*}
    &\frac{1}{N} \sum_{k=1}^{N} (\lambda \zeta^k)^{m+1-N} R_{\alpha,v}(\lambda\zeta^k,t)
         \\     =&  \frac{1}{N} \sum_{k=1}^{N} (\lambda \zeta^k)^{m+1-N} P_{\alpha,\alpha -  v - 1}(\lambda\zeta^k,t)
         \\     =&  P_{N\alpha,\alpha-v-1-\alpha(m+1-N)}(\lambda^N,t) = P_{N\alpha, N\alpha -v -\alpha m -1}(\lambda^N,t)
         \\     =& R_{N\alpha,v+\alpha m}(\lambda^N,t)
\end{align*}
as claimed.
\end{proof}

 \subsection{\bf Convolution to sum identities 
 when $\frac{\alpha}{\beta} = \frac{n}{m}$}

An immediate application of Theorem \ref{thm:expansion_identity} is that $P/R$ functions of orders $\alpha$ and $\beta$ can  be expressed as sums of $P/R$ functions of orders $m\alpha$ and $n\beta$ respectively. If $\alpha$ and $\beta $ are related so that $m\alpha=n\beta$,  then a convolution of the order-$\alpha$ and order-$\beta$ functions can be expressed as a sum of convolutions of $P/R$ functions whose order is the same, and can be evaluated through Theorem \ref{thm:ConvolutionRqRq}.
One may expect the resulting sum to have $2nm$ terms (where $m$, $n$ are the numbers of terms in each expansion). However, the next theorem shows that it can be reduced to a sum of $n+m$ $P/R$ functions.

\begin{theorem}
 \label{thm:Main_Convolution_theorem}
    Let $\frac{\alpha}{\beta} = \frac{n}{m}$ where $n,m$ are positive coprime integers (the quotient $\frac{n}{m}$ is irreducible fraction), 
    $ \lambda^n \neq \sigma^m$, $w_1,w_2>-1,$ and $w=w_1+w_2$. Then, the convolution of $P_{\alpha,w_1}(\lambda,t)$ and $P_{\beta,w_2}(\sigma,t)$ can be written as a sum of $n+m$ $P$ functions:
    \begin{equation}
    \begin{split}
    &P_{\alpha,w_1}(\lambda,t)*P_{\beta,w_2}(\sigma,t) \\
    =&
       \sum_{j=0}^{n-1} \frac{\sigma^j \lambda^m}{\lambda^m-\sigma^n} P_{\alpha,w+1+\beta j}(\lambda,t) + \sum_{k=0}^{m-1} \frac{\lambda^k \sigma^n}{\sigma^n -\lambda^m} P_{\beta,w+1+\alpha k}(\sigma,t)
       \end{split}
    \end{equation}
    Expressed in terms of Mittag-Leffler functions, the convolution is 
    \begin{equation}\begin{split}
    &t^{w_1}E_{\alpha,w_1+1}(\lambda t^\alpha) * t^{w_2} E_{\beta,w_2+1}(\sigma t^\beta)
    \\=&
            \sum_{j=0}^{n-1} \frac{\sigma^j \lambda^mt^{w+1+\beta j}}{\lambda^m-\sigma^n} E_{\alpha,w+2+\beta j}(\lambda t^\alpha) + \sum_{k=0}^{m-1} \frac{\lambda^k \sigma^n t^{w+1+\alpha k}}{\sigma^n-\lambda^m} E_{\beta,w+2+\alpha k}(\sigma t^\beta).\end{split}
    \end{equation}
For $R$ functions, with $v_1<\alpha,v_2<\beta$, $v=v_1+v_2 $, the convolution can be represented as
\begin{equation}\begin{split}
&R_{\alpha,v_1}(\lambda,t)* R_{\beta ,v_2}(\sigma,t)
 \\=&   \sum_{j=0}^{n-1}\frac{\sigma^j \lambda^m}{\lambda^m-\sigma^n}R_{\alpha,v-\beta(j+1)}(\lambda,t) + \sum_{k=0}^{m-1} \frac{\lambda^k \sigma^n}{\sigma^n-\lambda^m}R_{\beta,v - \alpha(k+1)}(\sigma,t).\end{split}
\end{equation}

\end{theorem}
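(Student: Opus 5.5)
The plan is to work entirely in the Laplace domain, as in the proofs of Theorems \ref{thm:ConvolutionRqRq} and \ref{thm:ConvolutionRqaRqa}, and to verify the claimed identity by clearing denominators rather than deriving the partial fractions from scratch. By the convolution theorem and Lemma \ref{lemma:Laplace_of_R_P}, the left-hand side has transform
\[
\frac{s^{\alpha+\beta-w-2}}{(s^\alpha-\lambda)(s^\beta-\sigma)}.
\]
Each term on the right also has an explicit transform. The term $P_{\alpha,w+1+\beta j}(\lambda,t)$ gives $s^{\alpha-w-2-\beta j}/(s^\alpha-\lambda)$, and $P_{\beta,w+1+\alpha k}(\sigma,t)$ gives $s^{\beta-w-2-\alpha k}/(s^\beta-\sigma)$. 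It therefore suffices to prove an algebraic identity between functions of $s$ for $\Re s$ large, and then invoke uniqueness of the inverse Laplace transform.

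To make the identity algebraic, set $\gamma=\alpha/n=\beta/m$ and $x=s^\gamma$, so that $s^\alpha=x^n$, $s^\beta=x^m$ and $s^{m\alpha}=s^{n\beta}=x^{nm}$. Factor out the common power $s^{-w-2}$ and multiply the claimed identity by $(\lambda^m-\sigma^n)(x^n-\lambda)(x^m-\sigma)$. It then reduces to
\[
(\lambda^m-\sigma^n)\,x^{n+m}
= \lambda^m x^n (x^m-\sigma)\sum_{j=0}^{n-1}\sigma^j x^{-mj}
\;-\; \sigma^n x^m (x^n-\lambda)\sum_{k=0}^{m-1}\lambda^k x^{-nk}.
\]
Each sum telescopes, by the finite geometric identities
\[
(x^m-\sigma)\sum_{j=0}^{n-1}\sigma^jx^{-mj}=x^m-\sigma^n x^{-m(n-1)},
\qquad
(x^n-\lambda)\sum_{k=0}^{m-1}\lambda^kx^{-nk}=x^n-\lambda^m x^{-n(m-1)}.
\]
Substituting, the right side becomes
\[
\lambda^m x^{n+m}-\lambda^m\sigma^n x^{n-mn+m}-\sigma^n x^{n+m}+\sigma^n\lambda^m x^{m-nm+n}.
\]
The two cross terms cancel exactly, leaving $(\lambda^m-\sigma^n)x^{n+m}$ as required.

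The step I expect to be the main obstacle is the bookkeeping of exponents. One must check that the exponents produced by the $P$-transforms are exactly $x^{n}\cdot x^{-mj}$ and $x^{m}\cdot x^{-nk}$ after factoring out $s^{-w-2}$, since $\beta j=m\gamma j$ and $\alpha k=n\gamma k$. One must also confirm that the cross terms have identical exponents $n+m-nm$; this works only because $m\alpha=n\beta$. Coprimality of $n,m$ is not really needed for the identity. The hypothesis $\lambda^m\neq\sigma^n$ is what permits division.

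Validity of inverting term by term needs only that every index $w+1+\beta j$ and $w+1+\alpha k$ exceeds $-1$, which holds since $w>-2$. The Mittag-Leffler form then follows by substituting $P_{\alpha,u}(\lambda,t)=t^uE_{\alpha,u+1}(\lambda t^\alpha)$. The $R$ form follows from \eqref{eq:P_and_R}: write $R_{\alpha,v_1}=P_{\alpha,\alpha-v_1-1}$ and $R_{\beta,v_2}=P_{\beta,\beta-v_2-1}$, so the combined $w$ is $\alpha+\beta-v-2$. Then $P_{\alpha,w+1+\beta j}=R_{\alpha,\alpha-(w+1+\beta j)-1}=R_{\alpha,v-\beta(j+1)}$, and symmetrically for the $\sigma$-terms.
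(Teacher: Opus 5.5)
Your proof is correct, and it takes a different route from the paper.

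The paper never works with the transform of the full convolution directly. It expands $P_{\alpha,w_1}(\lambda,t)$ and $P_{\beta,w_2}(\sigma,t)$ with the generalized Euler identity (Theorem \ref{thm:expansion_identity}, with $N=m$ and $N=n$ respectively). This gives $m$ and $n$ functions of the common order $m\alpha=n\beta$, with eigenvalues $\lambda^m$ and $\sigma^n$. It then applies the same-order identity of Theorem \ref{thm:ConvolutionRqRq} to each of the $nm$ pairs. Finally it collapses the resulting $2nm$ terms into $n+m$ by reading the Euler identity in reverse.

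Your argument is shorter and self-contained. It uses only Lemma \ref{lemma:Laplace_of_R_P}, the convolution theorem, two finite geometric sums in $x=s^{\alpha/n}$, and uniqueness of the Laplace transform; neither of the paper's intermediate theorems is needed. It also shows exactly where $m\alpha=n\beta$ enters: it makes the two cross terms share the exponent $n+m-nm$, so they cancel. And it shows that coprimality of $n,m$ only minimizes the number of terms.

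The cost is that yours verifies a formula already in hand, whereas the paper's argument produces the formula and explains why exactly $n+m$ terms survive. At bottom the two computations coincide. In the Laplace domain the Euler expansion reads $\frac{1}{x^n-\lambda}=\frac{\sum_{k=0}^{m-1}\lambda^k x^{n(m-1-k)}}{x^{nm}-\lambda^m}$. After multiplying through by $x^{n(m-1)}$, this is equivalent to your telescoping identity.

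You were also right to divide by $\lambda^m-\sigma^n$. The hypothesis ``$\lambda^n\neq\sigma^m$'' in the statement is a typo, as the denominators $\lambda^m-\sigma^n$ in the formula show.
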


\begin{proof}
     Since $m\alpha = n\beta$, we expand $P_\alpha$ as a sum of $P_{m\alpha}$ functions (\ref{eq:P_expansion}) and $P_\beta$ as a sum of $P_{n\beta}$ functions.
\begin{align*}
    &P_{\alpha,w_1}(\lambda,t)*P_{\beta,w_2}(\sigma,t) 
   \\ =& \sum_{k=0}^{m-1} \sum_{j=0}^{n-1} \lambda^k \sigma^j P_{m\alpha,w_1+\alpha k}(\lambda^m,t)* P_{n\beta,w_2+\beta j}(\sigma^n,t)
    \\
    =& \sum_{k=0}^{m-1} \sum_{j=0}^{n-1} \lambda^k \sigma^j  \frac{1}{\lambda^m-\sigma^n}\bigg( \lambda^mP_{m\alpha,w+1+\alpha k+\beta j }(\lambda^m,t)- \sigma^n P_{n\beta, w+1+\alpha k+\beta j}(\sigma^n,t)\bigg) 
\end{align*}
    This is a linear combination of $P$ functions, but with $2nm$ terms. However, applying the expansion identity \ref{eq:P_expansion} to gather the terms in the sum, we obtain:
    \begin{equation}
        \sum_{k=0}^{m-1} \lambda^k P_{m\alpha,w+1+\alpha k+\beta j }(\lambda^m,t) =  P_{\alpha,w+1+\beta j}(\lambda,t)
    \end{equation}
    Likewise steps are applied to the other term. Together, these simplifications give 
    \begin{equation}
        \frac{1}{\lambda^m-\sigma^n}\bigg( \sum_{j=0}^{n-1}  \lambda^m \sigma^jP_{\alpha,w+1+\beta j}(\lambda,t) - \sum_{k=0}^{m-1} \sigma^n 
        \lambda^{k} P_{\beta,w+1+\alpha k}(\sigma,t) \bigg)
    \end{equation}
which rearranges to the stated form.

Converting these to $R$ identities (which could as well be done via expansion and convolution of $R$ functions), we have the identities for convolutions of $R$ functions:
\begin{align*}
   & R_{\alpha,v_1}(\lambda,t)*R_{\beta,v_2}(\sigma,t) 
    \\
    =&P_{\alpha,\alpha-v_1- 1}(\lambda,t)*P_{\beta,\beta-v_2-1}(\sigma,t)
    \\
    =& \sum_{j=0}^{n-1}\frac{\sigma^j \lambda^m}{\lambda^m-\sigma^n}P_{\alpha,\alpha + \beta -v -1 + \beta j}(\lambda,t) + \sum_{k=0}^{m-1} \frac{\lambda^k \sigma^n}{\sigma^n-\lambda^m}P_{\beta,\beta+\alpha-v -1 + \alpha k}(\sigma,t)
    \\
    =& \sum_{j=0}^{n-1}\frac{\sigma^j \lambda^m}{\lambda^m-\sigma^n}R_{\alpha,v-\beta(j+1)}(\lambda,t) + \sum_{k=0}^{m-1} \frac{\lambda^k \sigma^n}{\sigma^n-\lambda^m}R_{\beta,v - \alpha(k+1)}(\sigma,t)
\end{align*}

\end{proof}

To convolve a $P_{\alpha,w}(\lambda,t)$ with an $R_{\beta,v}(\sigma,t)$, one needs to convert them both to either $R$ or $P$ form using Equation \ref{eq:P_and_R}. 

The inverse expansion identity, Theorem \ref{thm:Expansion_reverse}, can also be used to give a family of equivalent convolution-to-sum identities by expanding both in order $\frac{\alpha}{n} = \frac{\beta}{m}$ (not just one, because we can take $\ell$ to be nonzero in the secondary parameter for each expansion). For the sake of brevity, we only demonstrate the un-simplified version of this identity for $P$ functions.

 \begin{theorem}
     Let parameters $\alpha,\beta,n,m,\lambda,\sigma, w_1,w_2,w,v_1,v_2,v$  be as in Theorem \ref{thm:Main_Convolution_theorem}.  
     Let  $\zeta, \xi$ be $n$-th and $m$-th nontrivial roots of unity respectively, and $\ell_n ,\ell_m \in \Z_{\geq0}$ with $\ell_n<n,\ell_m<m.$ 
     Then the convolution of $P_{\alpha,w_1}(\lambda,t)$ and $P_{\beta,w_2}(\sigma,t)$ can be written as a sum of $n+m$ $P$ functions as
     \begin{equation}
         \begin{split}
             &P_{\alpha,w_1}(\lambda,t)*P_{\beta,w_2}(\sigma,t) 
             \\=& \sum_{j=0}^{n-1}   a_j P_{\frac{\alpha}{n},w+1 - (\ell_n + \ell_m) \frac{\alpha}{n} }(\zeta^j \lambda^{\frac{1}{n}} ) 
        +  \sum_{k=0}^{m-1} b_k P_{\frac{\beta}{m},w+1 - (\ell_n + \ell_m) \frac{\beta}{m} }(\xi^k \sigma^{\frac{1}{m}} ) 
         \end{split}
     \end{equation}
     where $a_j$ and $b_k$  $\in \C$ are coefficients which depend on $n,m,\lambda,\sigma,\ell_m,$ and $\ell_n$.
 \end{theorem}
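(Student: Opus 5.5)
The plan is to run the argument of Theorem \ref{thm:Main_Convolution_theorem} in the opposite direction. Instead of lifting both functions up to the common order $m\alpha=n\beta$, we push both functions down to the common order $\gamma:=\frac{\alpha}{n}=\frac{\beta}{m}$ using the inverse expansion identity, Theorem \ref{thm:Expansion_reverse}. Apply \eqref{eq:P_inverse_expansion} with order $\gamma$, root $\lambda^{1/n}$ and $N=n$. This writes $P_{n\gamma,\,w'}(\lambda,t)$ as $\frac1n\sum_{j}(\lambda^{1/n}\zeta^j)^{-\ell_n}P_{\gamma,\,w'-\gamma\ell_n}(\lambda^{1/n}\zeta^j,t)$. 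To match the statement we choose the secondary parameter $w'=w_1$, so that each summand is a $P_{\gamma,\,w_1-\ell_n\gamma}$ function. We need $w_1-\ell_n\gamma>-1$ for the Laplace transforms to exist; failing that, we argue purely at the level of Laplace transforms, where every identity is algebraic. The same step with $\xi$, $\sigma^{1/m}$, $N=m$ and $\ell_m$ expands $P_{\beta,w_2}(\sigma,t)$ into $m$ functions of order $\gamma$ with secondary index $w_2-\ell_m\gamma$.

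Next, we convolve the two finite sums term by term. This produces $nm$ convolutions $P_{\gamma,w_1-\ell_n\gamma}(\lambda^{1/n}\zeta^j,t)*P_{\gamma,w_2-\ell_m\gamma}(\sigma^{1/m}\xi^k,t)$, all of the same order $\gamma$. Each one is evaluated by Theorem \ref{thm:ConvolutionRqRq}. The result is
\[
\frac{1}{\mu_j-\nu_k}\Bigl(\mu_j P_{\gamma,W}(\mu_j,t)-\nu_k P_{\gamma,W}(\nu_k,t)\Bigr),
\qquad W=w+1-(\ell_n+\ell_m)\gamma,
\]
where $\mu_j=\zeta^j\lambda^{1/n}$ and $\nu_k=\xi^k\sigma^{1/m}$. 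Since $\gamma=\alpha/n=\beta/m$, this $W$ is exactly the secondary index in the statement.

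To apply Theorem \ref{thm:ConvolutionRqRq} we need $\mu_j\neq\nu_k$ for all $j,k$. If $\mu_j=\nu_k$, then raising to the power $nm$ gives $\lambda^m=\sigma^n$, which the hypothesis excludes. (The hypothesis is written as $\lambda^n\ne\sigma^m$, but this is presumably a typo for $\lambda^m\neq\sigma^n$, the condition actually used in Theorem \ref{thm:Main_Convolution_theorem}.) So all denominators are nonzero.

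Finally, we collect terms. Every resulting term is a multiple of either $P_{\gamma,W}(\mu_j,t)$ or $P_{\gamma,W}(\nu_k,t)$, so there are at most $n+m$ distinct functions. The coefficients are
\[
a_j=\frac{1}{nm}\sum_{k=0}^{m-1}\mu_j^{-\ell_n}\nu_k^{-\ell_m}\frac{\mu_j}{\mu_j-\nu_k},
\qquad
b_k=\frac{1}{nm}\sum_{j=0}^{n-1}\mu_j^{-\ell_n}\nu_k^{-\ell_m}\frac{-\nu_k}{\mu_j-\nu_k}.
\]
These depend only on $n,m,\lambda,\sigma,\ell_n,\ell_m$ (and the chosen roots), which proves the claim. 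The statement asks only for the unsimplified form, so these coefficients need not be summed in closed form. One could evaluate them with the root-of-unity sums $\sum_k \nu_k^{r}/(\mu-\nu_k)$, but we will not do so.

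The main obstacle is bookkeeping rather than analysis. We must apply \eqref{eq:P_inverse_expansion} with the correct shift of the secondary parameter, respect the branch choice of $\lambda^{1/n}$ and $\sigma^{1/m}$, and check the admissibility conditions $w_i-\ell\gamma>-1$. If those conditions fail, the cleanest fallback is to verify the final identity directly in the Laplace domain: both sides are then finite sums of rational functions in $s^\gamma$ times the common power $s^{\gamma-W-1}$, and Lemma \ref{lemma:Laplace_of_R_P} together with uniqueness of the Laplace transform closes the argument.
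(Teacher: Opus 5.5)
Your proposal is correct and follows the paper's own proof. You push both factors down to the common order $\alpha/n=\beta/m$ with the inverse expansion identity of Theorem~\ref{thm:Expansion_reverse}, evaluate the $nm$ same-order convolutions with Theorem~\ref{thm:ConvolutionRqRq}, and regroup into $n+m$ $P$ functions, arriving at exactly the paper's coefficients $a_j,b_k$. You also check three things the paper leaves implicit: that $\mu_j\neq\nu_k$ follows from $\lambda^m\neq\sigma^n$ (you are right that the stated $\lambda^n\neq\sigma^m$ is a typo), that the shifted indices $w_1-\ell_n\alpha/n$ and $w_2-\ell_m\beta/m$ are admissible, and, though neither you nor the paper says so, that $\zeta,\xi$ must be \emph{primitive} rather than merely nontrivial roots of unity for Theorem~\ref{thm:Expansion_reverse} to hold.
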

\begin{proof}
    Because $\frac{\alpha}{\beta} = \frac{n}{m},$ $\frac{\alpha}{n} =\frac{\beta}{m} $, we represent the convolution of the $P$ functions as:  
    \begin{align*}
        &P_{\alpha,w_1}(\lambda,t)*P_{\beta,w_2}(\sigma,t) 
        \\=&\frac{1}{nm}\sum_{j=0}^{n-1}\sum_{k=0}^{m-1} 
        (\zeta^j \lambda^{\frac{1}{n}} )^{-\ell_n}(\xi^k \sigma^{\frac 1m})^{-\ell_m} P_{\frac{\alpha}{n},w_1 -\ell_n \frac{\alpha}{n}}(\zeta^j \lambda^{\frac{1}{n}} )* 
        P_{\frac{\beta}{m},w_2 -\ell_m \frac{\beta}{m}}(\xi^k \sigma^{\frac{1}{m}} )
        \\=&\frac{1}{nm}\sum_{j=0}^{n-1}\sum_{k=0}^{m-1} 
         \frac{
       \zeta^j \lambda^{\frac 1n} P_{\frac{\alpha}{n},w+1 - (\ell_n + \ell_m) \frac{\alpha}{n} }(\zeta^j \lambda^{\frac{1}{n}} ) - \xi^k \sigma^{\frac 1m}
        P_{\frac{\beta}{m},w+1 - (\ell_n + \ell_m) \frac{\beta}{m}}(\xi^k \sigma^{\frac{1}{m}} )
        }   {
        (\zeta^j \lambda^{\frac{1}{n}} )^{\ell_n}(\xi^k \sigma^{\frac 1m})^{\ell_m} (\zeta^j \lambda^{\frac{1}{n}}-\xi^k \sigma^{\frac{1}{m}} )
        }
        \\=&
        \frac{1}{nm} \sum_{j=0}^{n-1} P_{\frac{\alpha}{n},w+1 - (\ell_n + \ell_m) \frac{\alpha}{n} }(\zeta^j \lambda^{\frac{1}{n}} ) 
            \left(\sum_{k=0}^{m-1} 
                \frac{1}{
                (\zeta^j \lambda^{\frac{1}{n}} )^{\ell_n-1}(\xi^k \sigma^{
                \frac 1m})^{\ell_m} (\zeta^j \lambda^{\frac{1}{n}}-\xi^k \sigma^{\frac{1}{m}} )} \right)
        \\&+ \frac{1}{nm} \sum_{k=0}^{m-1} P_{\frac{\beta}{m},w+1 - (\ell_n + \ell_m) \frac{\beta}{m} }(\xi^k \sigma^{\frac{1}{m}} ) 
            \left(\sum_{j=0}^{n-1} 
                \frac{1}{
                (\zeta^j \lambda^{\frac{1}{n}} )^{\ell_n}(\xi^k \sigma^{
                \frac 1m})^{\ell_m-1} (\xi^k \sigma^{\frac{1}{m}}  - \zeta^j \lambda^{\frac{1}{n}})} \right)
        \\=&  \sum_{j=0}^{n-1}   a_j P_{\frac{\alpha}{n},w+1 - (\ell_n + \ell_m) \frac{\alpha}{n} }(\zeta^j \lambda^{\frac{1}{n}} ) 
        +  \sum_{k=0}^{m-1} b_k P_{\frac{\beta}{m},w+1 - (\ell_n + \ell_m) \frac{\beta}{m} }(\xi^k \sigma^{\frac{1}{m}} ) 
    \end{align*}
as claimed, where the coefficients $ a_j$ and  $ b_k$ have the following form:
\begin{equation}
    \begin{split}
        a_j =& \frac{1}{nm}\sum_{k=0}^{m-1} 
                \frac{1}{
                (\zeta^j \lambda^{\frac{1}{n}} )^{\ell_n-1}(\xi^k \sigma^{
                \frac 1m})^{\ell_m} (\zeta^j \lambda^{\frac{1}{n}}-\xi^k \sigma^{\frac{1}{m}} )}
    \\  b_k =& \frac{1}{nm}\sum_{j=0}^{n-1} 
                \frac{1}{
                (\zeta^j \lambda^{\frac{1}{n}} )^{\ell_n}(\xi^k \sigma^{
                \frac 1m})^{\ell_m-1} (\xi^k \sigma^{\frac{1}{m}}  - \zeta^j \lambda^{\frac{1}{n}})}.
    \end{split}
\end{equation}

\end{proof}

In the next section, we present examples of employing the derived fundamental convolution identities to solution of nonhomogeneous equations with fractional derivative operators.  

\section{Applications and examples} \label{sec:4}

\setcounter{section}{4} \setcounter{equation}{0} 
 
\subsection{\bf Nonhomogeneous equatiions}

The developed convolution-to-sum identities being fundamental in the theory of Mittag-Leffler functions, are instrumental in constructing solutions of nonhomogeneous fractional-differential equations with a forcing term. Consider the modification of equation \ref{eq:eigenfunction_equation} with a source term:
\begin{equation}
    D^\alpha f(t) - \lambda f(t) = g(t) \label{eq:inhomogenous_fundamental_eqn}.
\end{equation}
The left hand side consists of a linear operator, $(D^\alpha-\lambda)$, acting on $f$, and as such the solution to this equation consists of a homogeneous solution (determined by the initial data) plus a nonhomogeneous solution determined by the inhomogeneous term. We have already derived the homogeneous solution, it is given by \ref{eq:eigenfunction_solutions}. Let us construct the nonhomogeneous solution due to the source term in the right hand side. Reviewing \ref{eq:LaplaceOfCaputo},\ref{eq:LaplaceOfRL} reveals that, modulo initial data, the RL and Caputo derivatives are the same in Laplace space: $\El[D^\alpha f] = s^{\alpha} \El[f].$ Therefore, the inhomogeneous solution, $f_{in}$ does not depend on which convention is used. Taking  \ref{eq:inhomogenous_fundamental_eqn} into Laplace space:
\begin{align*}
    s^{\alpha}\El[f_{in}] - \lambda \El[f_{in}] =& \El[g]
    \\
    \El[f_{in}] = \frac{1}{s^\alpha-\lambda} \El[g].
\end{align*}
By the Convolution Theorem (\ref{thm:ConvolutionTheorem}), \begin{equation}
    f_{in} = R_{\alpha,0}(\lambda,t)*g. \label{eq:inhomogenous_solution}
\end{equation}
Notice that function $P/R$ encapsulate very general behavior. 
For example, exponential and trigonometric functions are particular case of them:
\begin{align*}
    \exp(at) =& R_{1,0}(a,t) = P_{1,0}(a,t)
    \\
    \cos(\omega t) =& R_{2,1}(-\omega^2,t) = P_{2,0}(-\omega^2,t)
    \\
    \sin(\omega t) =&  \omega  R_{2,0}(-\omega^2,t) = \omega P_{2,1}(-\omega^2,t).
\end{align*}
Hence, 
provided $\alpha$ is rational, Theorem \ref{thm:Main_Convolution_theorem} provides an explicit sum form for the convolution of $R_{\alpha,0}(\lambda)$ with any linear combination of the exponential, trigonometric, and hyperbolic functions (in addition to any $P$ or $R$ functions of rational order). Representation of the solution as a sum of Mittag-Leffler functions instead of their convolution, significantly reduces required computations and results in efficient numerical methods of solution of nonhomogeneous fractional differential equations.

\subsection{\bf Inhomogeneous subdiffusion}

As an explicit example, consider a time-fractional subdiffusion model, described in \cite{Mainardi_Gorenflo_2002_fractional_diffusion_random_walk,Mainardi_1994_IVP_fractional_diffusionwaveeq,Mainardi1996_Fractional_Relaxation_oscillation}, more generally in \cite{Metzler2000_Random_Walk_Anomalous_Diff,Saichev_Zaslavsky_1997_Fractional_kinetic_eqs,Henry_Langlands_Straka_2010_intro_fractional_diffusion} with a physical realization in \cite{Riechers2024_Cluster_dynamics_Fractional_diffusion_mettalic_glass}:
\begin{equation} \label{eq:subdiff_model}
    \caputo[]{\alpha}{t} u(x,t) - a \Delta_x u(x,t) = g(x,t), \,\quad u(x,t=0) = 0
\end{equation}
for $0 < \alpha < 1$, in $\Omega\times [0,\infty) $ in a bounded, smooth, simply connected domain $\Omega \subset \R^n$, with $u = 0$ on $\partial \Omega$. The eigenfunctions of the Laplacian $\{\phi_k(x)\}$ satisfy $\Delta_x \phi_k = -k^2 \phi_k$. The functions $\{\phi_k\}$ can be chosen to form a complete orthonormal basis in $L^2(\Omega)$; the function $u$ in this basis has the form: 
\begin{equation} \label{eq:modal_expansion_of_u}
u(x,t) = \sum_k \tilde u_k(t) \phi_k(x),\, \quad \tilde u_k(t) = \int_\Omega u(x,t) \phi_k (x) d x .
\end{equation}
A similar representation in terms of functions $\phi_k(x)$ holds for the function $g$ with the coefficients $\tilde g_k(t)$.
Then, the fractional differential equation can be written for each mode $\tilde u_k$ as:
\begin{equation}
    D^\alpha_t \tilde u_k(t) + a k^2 \tilde u_k(t) = \tilde g_k(t).
\end{equation}
This equation is a particular case of \ref{eq:inhomogenous_fundamental_eqn},
and thus the solution for each mode is given by (see \ref{eq:inhomogenous_solution}) \begin{equation}
  \begin{split}
        \tilde u_k(t) =  R_{\alpha,0}(-a k^2,t) *\tilde g_k.
  \end{split}
\end{equation}
The initial conditions are specified to be zero in \ref{eq:subdiff_model}, but had they been nonzero, there would be an additional (homogeneous) term in the solution for each mode, given by Theorem \ref{thm:solution_to_eigenproblem}. Suppose $\alpha = 1/2$, and that the source term for each mode is given by $\tilde g_k(t) = c_ke^{-r_kt}$. Then $\tilde u_k$ is given by \begin{equation}
    \tilde u_{k}(t) = R_{1/2,0}(-ak^2,t)*\exp(-r_k t) = R_{1/2,0}(-ak^2,t)* R_{1,0}(-r_k,t).
\end{equation}
This is a convolution of $R$ functions so applying Theorem \ref{thm:Main_Convolution_theorem} with $ n=1,m=2$, $\tilde u_k$ can be represented as a sum of only three $R$ functions:
\begin{equation}
    \tilde u_{k}(t) = \frac{c_k}{a^2k^4 +r} \bigg( a^2 k^4 R_{1/2,-1}(-ak^2,t) + r_k R_{1,-1/2}(-r_k,t) - r_k ak^2 R_{1,-1}(-r_k,t) \bigg).\label{eq:inhom_mode_subdiff}
\end{equation} 
Alternatively, $\tilde u_k$ can be expressed in terms of $P$ functions, because $R_{1/2,0}=P_{1/2,-1/2}.$ This representation also has only three terms:
\begin{equation}\begin{split}
    \tilde u_{k}(t) =& c_k P_{1/2,-1/2}(-ak^2)*P_{1,0}(-r_k)\\=& \frac{c_k}{a^2k^4+r_k}\bigg( a^2k^4 P_{1/2,1/2}(-ak^2,t)  + r_k P_{1,1/2}(-r_k,t) -k^2r_k P_{1,1}(-r_k,t)\bigg).\end{split}
\end{equation}
$\tilde u_k$ for $k=1,r=1.5$ is displayed in the left panel Figure \ref{fig:inhomogeneous_solutions} with the exponential forcing function $e^{-1.5 t}$.

For another example, consider \ref{eq:subdiff_model} for $\alpha=3/4$ with sinusoidal forcing term $\tilde g_k = c_k\cos(\omega_k t) = R_{2,1}(-\omega_k^2 ,t) = P_{2,0}(-\omega_k^2,t).$ The inhomogeneous solution for each mode is thus given by $R_{3/4,0}(-ak^2)*P_{2,0}(-\omega_k^2)$.  In this case, $\frac{3/4}{2} = \frac{n}{m}$, so $n=3$, $m=8$, and $R_{3/4,0} = P_{3/4,-1/4}$.  Using  Theorem \ref{thm:Main_Convolution_theorem}, $\tilde u_k$ can be represented as
\begin{equation}\begin{split}
    &\tilde u_{k}(t) = c_k P_{3/4,-1/4}(-ak^2,t)*P_{2,0}(-\omega_k^2,t)
    \\=&c_k\sum_{j=0}^2 \frac{(-\omega_k^2)^j a^8k^{16}}{a^8k^{16} -\omega_k^6}P_{3/4,2j-1/4}(-ak^2,t) + c_k\sum_{\ell=0}^{7}\frac{(-ak^2)^\ell \omega_k^6}{\omega_k^6-a^8k^{16}}P_{2,(3j-1)/4}(-\omega_k^2,t). 
    \end{split}\label{eq:sol_to_inhom_diffusion_with_cosine}
\end{equation}
The right panel of Figure \ref{fig:inhomogeneous_solutions} displays this solution along with the sinusoidal forcing term for $k=2$ and $\omega_k = 2$.

\begin{figure}[t] 
  \begin{center}
    \begin{minipage}[b]{0.48\textwidth}
      \centering
      \includegraphics[width=\textwidth]{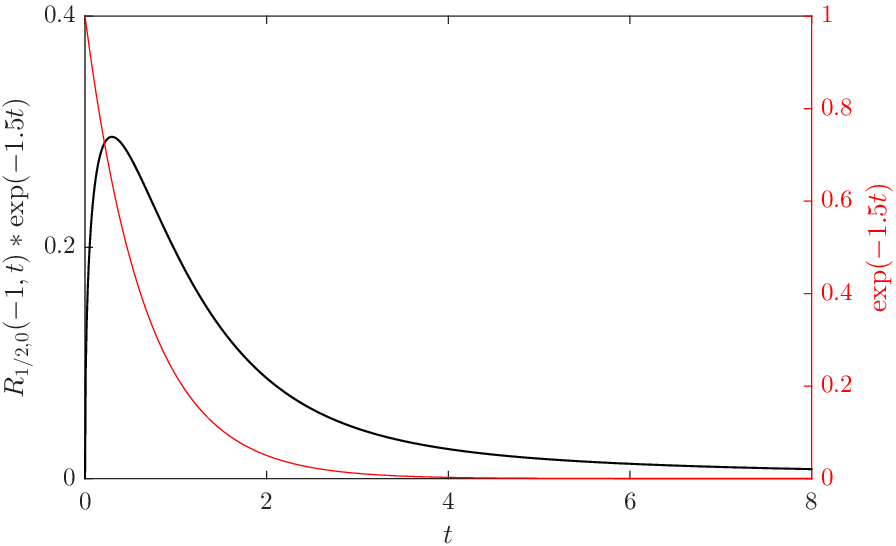}
    \end{minipage}
    \hfill
    \begin{minipage}[b]{0.49\textwidth}
      \centering
      \includegraphics[width=\textwidth]{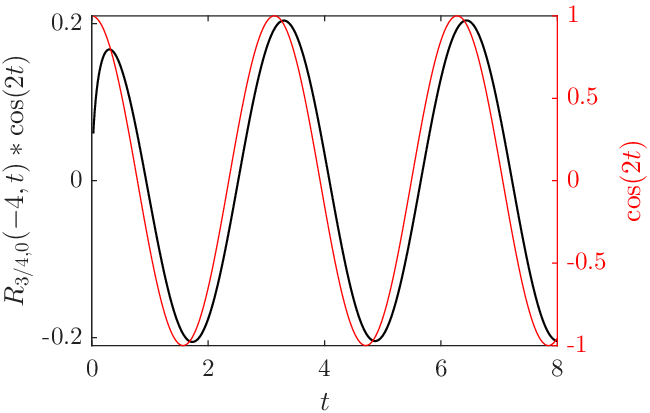}
    \end{minipage}

    \caption{Inhomogeneous solutions to $D^{\alpha} f +k^2 f = g$ together with the forcing term $g$. The left panel shows solution corresponding to $\alpha = 1/2$, $g(t)=e^{-1.5 t}$, and $k=1$. The right panel presents the solution for $\alpha = 3/4$, $g(t)=\cos(2 t)$, and $k=2$. Note that on the left, though the source term decays exponentially, the inhomogeneous solution has a 'long tail' characteristic of subdiffusion \cite{Mainardi1996_Fractional_Relaxation_oscillation}.}
    \label{fig:inhomogeneous_solutions}
  \end{center}
\end{figure}

\subsection{\bf  Attenuated wave model}
\label{subsec:cwk_model}
Michele Caputo introduced the "Caputo" derivative in \cite{Caputo1967_Linear_models_Of_Dissipation} to construct a wave model with  almost frequency independent dissipation. This model (see equation 8 in \cite{Caputo1967_Linear_models_Of_Dissipation}, as well as subsequent works \cite{Wismer2006_Acoustic_pulses_Inhom_media_powerlaw_attenuation}, \cite{Holm2013_Deriving_fractional_Wave_eq_etc}, \cite{Kaltenbacher2021_Inverse_problems_for_Wave_eqs_with_frac_attenuation}) is referred to as the fractional Kelvin-Voigt or the Caputo-Wismer-Kelvin model:
\begin{equation}
    D_t^2 u- \eta \Delta_x \caputo[]{\alpha}{t} u-c^2 \Delta_x u  = g \label{eq:cwk_equation}.
\end{equation}
Here  $0 < \alpha < 1$ and the equation is considered in $\Omega\times [0,\infty) $, where $\Omega \subset \R^n$ 
is a bounded, smooth, simply connected domain with sufficient constraints on $u\vert_{\partial \Omega}$ for the Laplacian $\Delta_x$ to have a discrete spectrum. We include this model to demonstrate another use-case of convolution-to-sum identities, and illustrate an effective approximation that works even when the order of the fractional derivative $\alpha$ is irrational.

After expanding the solution $u$ of \ref{eq:cwk_equation} in terms of the orthonormal eigenfunctions $\phi_k(x)$ of $\Delta_x$ as given in \ref{eq:modal_expansion_of_u}, the equation for each mode $u_k(t)$ is:
\begin{equation}
    D^2_t \tilde u_k(t) +(\eta k^2 )\caputo[]{\alpha}{t} \tilde u_k(t) + c^2 k^2 \tilde u_k(t)  = \tilde g_k(t),
\end{equation}
where $\tilde g_k(t)$ are the coefficients of the expansion of $g(x,t)$ in terms of eigenfunctions of the Laplacian $\phi_k(x)$.
This equation has time derivatives in two terms unlike the equations previously considered. 
Just as when solving the eigenfunction equation, we transform this equation into the Laplace domain,
taking $\tilde U_k(s))  =\El[\tilde u_k(t)](s), \tilde G_k(s) = \El[\tilde g_k(t)](s)$ and using  \ref{eq:LaplaceOfCaputo}: 
\begin{equation}
    s^2 \tilde U_k(s) - s \tilde u_k(0) - \frac{\partial\tilde u_k}{\partial t}\bigg|_{t=0} + \eta k^2 (s^\alpha \tilde U_k(s)- s^{\alpha-1} \tilde u_k(0)) + c^2 k^2 \tilde U_k(s) = \tilde G_k(s).
\end{equation}
Solving for $\tilde U_k(s)$ yields 
\begin{equation} \label{eq:laplace_unsimplified_sol_cwk}
     \tilde U_k(s)  =   \frac{1}{  s^2 + \eta k^2 s^\alpha + c^2k^2 }\left(s \tilde u_k(0)   +   \eta k^2  s^{\alpha-1} \tilde u_k(0)    +      \frac{\partial\tilde u_k}{\partial t} \bigg|_{t=0} + \tilde G_k(s) \right).
\end{equation}

This form is analogous to those considered earlier (such as $\frac{s^v}{s^\alpha-\lambda}$), but the additional term in the denominator prevents us from representing this as a geometric series.

Suppose the order of the derivative is rational,  $\alpha = \frac mn$ (with $m<n$ as $0<\alpha<1$). Then the denominator can be expressed as a polynomial in $z =s^{1/n}$. Specifically,
\begin{equation}
  p(z) = z^{2n} +\eta k^2 z^m + c^2k^2.
\end{equation}

By assumption the Laplacian has a discrete spectrum, so this order-$2n$ polynomial can be factored for each value of $k^2$ via the fundamental theorem of algebra:
\begin{equation}
    z^{2n} +\eta k^2 z^m + c^2k^2 = \prod_{j=1}^{2n} (z-\lambda_{kj})
\end{equation}
(the roots have been labeled with index $k$ as they are mode-dependent). Now the factor $\frac{1}{s^2+\eta k^2s^\alpha + c^2 k^2}$ is represented as
\begin{equation}
  \frac{1}{s^2+\eta k^2s^\alpha + c^2 k^2} =   \prod_{j=1}^{2n}\frac{1}{(s^{1/n}-\lambda_{kj})},
\end{equation}
 and may be equivalently treated through partial fraction decomposition or as a repeated convolution of $R_{1/n}$ functions in time.

The representation of the convolution between $R$ functions given in \ref{eq:Rq_Rq_redundant}, can be used here to demonstrate the equivalence to partial fraction decomposition in the Laplace domain:
\begin{equation} \label{eq:redundant_r_conv_returned}
    R_{\alpha,v_1}(\lambda,t)* R_{\alpha,v_2}(\sigma,t) = \frac{1}{\lambda-\sigma} R_{\alpha,v}(\lambda,t)  +\frac{1}{\sigma-\lambda} R_{\alpha,v}(\sigma,t) .
\end{equation}
Assuming $p(z)$ has no repeated roots and applying \ref{eq:redundant_r_conv_returned} $2n$ times yields that
\begin{equation}
    \Conv_{j=1}^{2n} R_{1/n,v_j}(\lambda_{kj},t)= \sum_{j=1}^{2n}  \left(\prod_{\ell\neq j} \frac{1}{\lambda_{kj}-\lambda_{k\ell}} \right)R_{1/n,v}(\lambda_{kj},t),
\end{equation}
where $\Conv$ is used to represent repeated convolution. This form is identical to the result derived via partial fraction decomposition:
\begin{equation}
    \prod_{j=1}^{2n}\frac{s^v}{(s^{1/n}-\lambda_{kj})}= \sum_{j=1}^{2n} \frac{s^v}{s^{1/n}-\lambda_{kj}}\prod_{\ell\neq j} \frac{1}{\lambda_{kj} - \lambda_{k\ell}} .
\end{equation}

Hence, the solution for each mode is given by \begin{equation} \begin{split}
    \tilde u_k(t) =& \sum_{j=1}^{2n}  \left(\prod_{\ell\neq j} \frac{1}{\lambda_{kj}-\lambda_{k\ell}} \right)
    \bigg(\tilde u_k(0)R_{1/n,1}(\lambda_{kj},t) + \\&\eta k^2 \tilde u_k(0) R_{1/n,\alpha-1}(\lambda_{kj},t) + \frac{d\tilde u_k}{dt}\bigg|_{t=0} R_{1/n,0}(\lambda_{kj},t) + \tilde g_k(t) * R_{1/n,0}(\lambda_{kj},t) \bigg)
\end{split} \label{eq:time_domain_solution_of_cwk_for_modek}
\end{equation}
except in the case of repeated roots, when the results of Theorem \ref{thm:ConvolutionRqaRqa} can be used.

\begin{figure}[H]
    \centering
    \includegraphics[scale=.375]{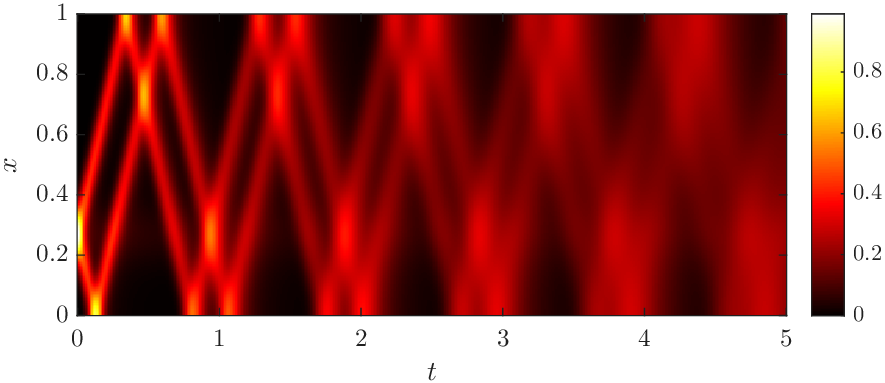} \includegraphics[scale=.375]{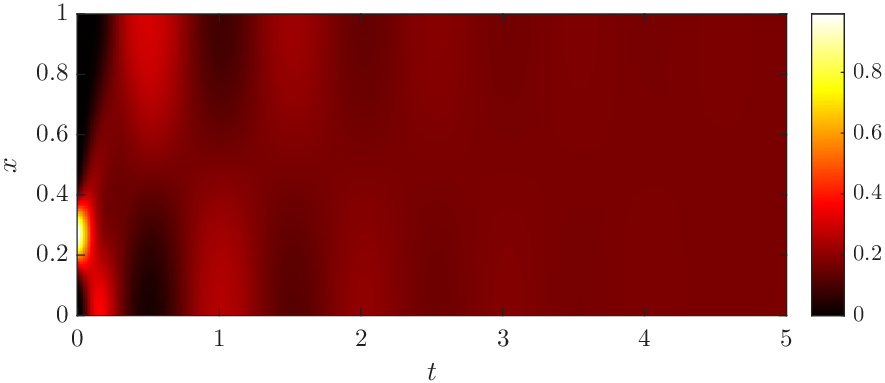}
    \caption{A comparison of solutions to \ref{eq:cwk_equation} in one spatial dimension with $\alpha=1/3$ on the left and $\alpha=1$ on the right. For both, $\eta = 0.2$, $c=2$, initial conditions are given by $u(x,t=0) = e^{-(x-0.27)^2/0.1^2) },\frac{\partial u }{\partial t}\big|_{t=0} = 0$, with  Neumann boundary conditions at $x=0,1$. Qualitatively, for $\alpha <1$ the wave stays more coherent as it decays--displaying the 'approximately frequency-independent damping' described by Caputo.}
    \label{fig:cwk_neumann}
\end{figure}

The full solution in time and space is given by summing over all modes as in \ref{eq:modal_expansion_of_u}; an example is displayed in Figure \ref{fig:cwk_neumann}. 
The developed method gives an analytic solution to \ref{eq:cwk_equation} in terms of Mittag-Leffler functions, except with the single numerical step of factoring the polynomial.
However, this method is not directly applicable if $\alpha$ is irrational (unless we approximate it by a rational number) and the relationship between $\alpha,$  $\eta$, $k$, and the decay rates of the solution is not clear.
To show how these challenges can be bypassed for this model, we follow Caputo in \cite{Caputo1967_Linear_models_Of_Dissipation}, gaining insight into solution behavior by approximating the roots of the denominator for small $\eta$.

   The denominator of equation \ref{eq:laplace_unsimplified_sol_cwk} is given by \[
   M_k(s) = s^2+\eta k^2 s^\alpha + c^2 k^2.\]
    Using an asymptotic expansion in $\eta\ll c^2$, with $s_* = s_0 + \eta s_1 + O(\eta^2)$, such that $M_k(s_*) = 0,$ yields that 
\begin{equation}
    \begin{split}
        s_0 =& \pm ick
        \\
        s_1 =& -\frac{ 1}{2}s_0^{\alpha-1} k^2 = -\frac12 c^{\alpha-1} k^{\alpha+1} (\pm i)^{\alpha-1}
    \end{split}.
\end{equation}
From here, the approximation to the root $s_*$ is
\begin{align*}
    s_* &= \pm i \omega_k - r_k + O(\eta^2)
\end{align*}
where
\begin{equation}
    \begin{split}
        r_k =& \eta\frac12  c^{\alpha-1} k^{\alpha+1} \sin(\frac{\pi\alpha}{2})
       \\ \omega_k =& ck + \eta \frac12  c^{\alpha-1} k^{\alpha+1}  \cos(\frac{\pi\alpha }{2}) .
    \end{split} \label{eq:rk_wk_definition}
\end{equation}
Hence,  $M_k(s)$ can be approximated as $(s+r_k+i\omega_k)(s+r_k-i\omega_k)$. Consider the first term in \ref{eq:laplace_unsimplified_sol_cwk}, assuming $\tilde u_k(0) = 1$, $
    \chi_k(s) =\frac{s}{M_k(s)}.$ 
This term of the solution can be approximated as 
\begin{equation} \label{eq:m_k approximaiton} \chi_k(s) =  \frac{s}{M_k(s)} \approx \frac{s}{(s+r_k+i\omega_k)(s+r_k-i\omega_k)} .
\end{equation}
The corresponding inverse Laplace transform of the right-hand side of \ref{eq:m_k approximaiton} is 
\begin{equation} \label{eq:chi_t_approximation}
    \chi_k(t) = \El^{-1}[\chi_k(s)](t) \approx \cos(\omega_kt) e^{-r_k t} - \frac{r_k}{\omega_k} \sin(\omega_kt) e^{-r_k t}.
\end{equation} The exact functions $\chi_k(t)$ for $k=1,2,4,8$, and their approximations as given in the right-hand side of \ref{eq:chi_t_approximation}, are shown in Figure \ref{fig:cwk_modes}. The exact representations in time are given by the first term in \ref{eq:time_domain_solution_of_cwk_for_modek}, which is obtained through the convolution methods for Mittag-Leffler functions developed in this paper.

Similarly, the exact driven solution, given by the last term of \ref{eq:time_domain_solution_of_cwk_for_modek}, can be approximated in the Laplace domain as 
\begin{equation}
    \frac{\tilde G_k(s)}{M_k(s)} \approx \frac{\tilde G_k(s)}{(s+r_k+i\omega_k)(s+r_k-i\omega_k)}.
\end{equation}
The inverse Laplace transform of $\frac{1}{(s+r_k+i\omega_k)(s+r_k-i\omega_k)}$ is $\frac{1}{\omega_k} \sin(\omega_k t) e^{-r_k t}$. Hence, a convolution of $\frac{1}{\omega_k} \sin(\omega_k t) e^{-r_k t}$ with $\tilde g_k(t)$ gives an approximation to the driven solution. For $\tilde g_k(t) = \cos(t)$ this approximation to the driven solution with homogeneous initial conditions is shown in Figure \ref{fig:driven_cwk_mode}, along with the exact solution (given by the last term of \ref{eq:time_domain_solution_of_cwk_for_modek}), with the convolutions computed using Theorem \ref{thm:Main_Convolution_theorem}.
\begin{figure}[t]
    \begin{center}
    \includegraphics[scale=.5]{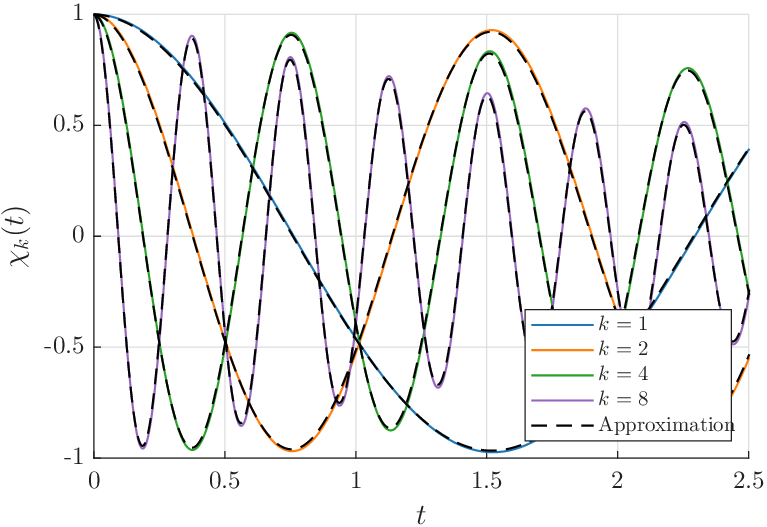}
    \caption{ 
    The exact and approximate time-domain representations of 
    $\chi_k(s) = \frac{s}{s^2 +\eta k^2 s^\alpha+ c^2k^2}$ for various values of $k$, with $\alpha=1/4$, $\eta=0.2$, and $c=2$. 
    Exact representations in time are given by the first term in \ref{eq:time_domain_solution_of_cwk_for_modek}, which is obtained through the convolution methods for Mittag-Leffler functions developed in this paper. The approximations are given by $e^{-r_kt} \cos(\omega_k t) - \frac{r_k}{\omega_k} e^{-r_k t}\sin(\omega_k t)$, where $r_k$ and $\omega_k$ are given in \ref{eq:rk_wk_definition}. The absolute difference for each $k$ case is of order $\eta^2$.}
    \label{fig:cwk_modes}
            \end{center}
\end{figure}

\begin{figure}[t]
    \begin{center}
    \includegraphics[scale=.5]{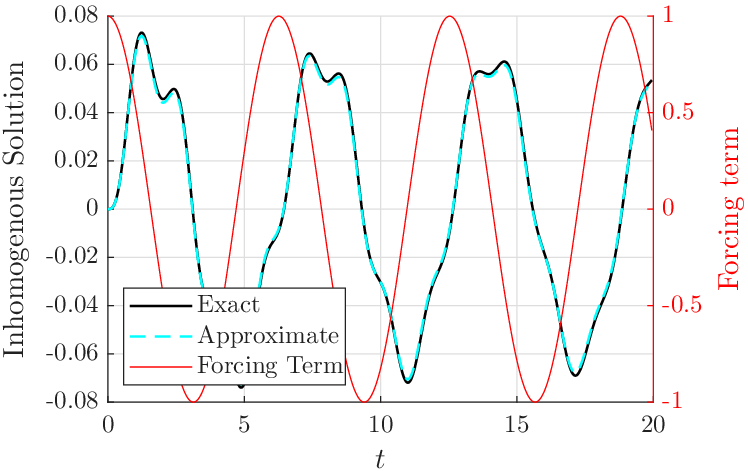}
    \caption{Exact and approximate solutions to $D^2_t f  + \eta k^2 D^\alpha_t  + c^2 k^2 = \cos( t)$, with $\alpha = 1/4$, $\eta=0.2$, $c=2$, $k=2$, and homogeneous initial conditions. The exact  and approximate solutions are given by $\El^{-1}\left[\frac{1}{s^2 + \eta k^2 s^\alpha + c^2 k^2}\right]$ and $e^{-r_k t} \sin (\omega_k t) $, respectively, convolved with $\cos(t).$  The relative error is less than 0.03.
    } \label{fig:driven_cwk_mode} \end{center}
\end{figure}
 


\section{Conclusion} \label{sec:5}

\setcounter{section}{5} \setcounter{equation}{0} 

In this paper we have demonstrated that convolutions of Mittag-Leffler type functions can be represented as sums of them. 
These functions characterize the eigenfunctions of Caputo and Riemann-Liouville fractional derivatives, and generalize the properties of exponentials and sinusoids.

We have shown that a convolution of two Mittag-Leffler type functions of the same order can be represented as a sum of two functions of the same type. This generalizes a well-known result for convolutions of exponential or trigonometric functions: a convolution of two exponentials or of two sinusoids is given as a sum of two exponentials or sinusoids, respectively.

We also have shown that when two Mittag-Leffler type functions have orders that differ by a  rational factor $n/m$, their convolution can be represented as a sum of $m+n$ Mittag-Leffler type functions. 
Additionally, in the case where the orders are not rationally related the convolution of the two functions can be represented as an infinite series of Mittag-Leffler type functions. 
Furthermore, we have shown that Mittag-Leffler type functions satisfy a generalized form of Euler's identity, relating functions of order $\alpha$ to those of order $n\alpha$.

These results are fundamental to the analysis of Mittag-Leffler functions and fractional derivatives. In addition, they provide an efficient tool for analyzing and solving inhomogeneous fractional differential equations, where convolutions of Mittag-Leffler type functions are common. We have demonstrated the strength of the obtained results by using them to represent the solutions to a forced anomalous diffusion equation, as well as the  Caputo-Wismer-Kelvin (fractional Kelvin-Voigt model) differential equation.

\appendix
\setcounter{equation}{0}
\renewcommand{\theequation}{\thesection.\arabic{equation}}
\makeatletter
\makeatletter
\renewcommand{\thetheorem}{\thesection.\arabic{theorem}}
\renewcommand{\thedefinition}{\thesection.\arabic{definition}}
\renewcommand{\thelemma}{\thesection.\arabic{lemma}}
\renewcommand{\thecorollary}{\thesection.\arabic{corollary}}
\makeatother

\makeatother
\setcounter{theorem}{0}

\section{ Appendix: Laplace transform} \label{secA1}
For convenience, we collect here information about properties of the Laplace transform; they are well known and can be found in many textbooks. 
\begin{definition}
Assuming that the integral in \ref{def:LaplaceTransform} converges, 
the Laplace transform of $f(t)$ is defined as
\label{def:LaplaceTransform}
    \begin{equation}
    \El[ f(t)](s) := \int_0^\infty f(t) e^{-st}dt.\label{eq:LaplaceTransform}
\end{equation}

\end{definition}
 A primary use of the Laplace transform is to transform  differential equations into algebraic equations, replacing derivatives in $t$ with factors of $s$.
 \begin{theorem}\label{thm:LaplaceOfD}
     Let $f$ have a Laplace transform, be continuous on $[0,\infty)$, and $f'$ be continuous on $(0,\infty)$. Then the Laplace transform of $f'(t)$ is given by
  \begin{equation}
    \El[D_tf](s) = s \El[f](s)-f(0) \label{eq:LaplaceOfD}.
\end{equation} \end{theorem}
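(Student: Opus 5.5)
The identity \eqref{eq:LaplaceOfD} follows from integrating by parts in the defining integral \eqref{eq:LaplaceTransform}. Fix $s$ in the region where $\El[f](s)$ converges. For $0<\epsilon<T<\infty$, $f'$ is continuous on $[\epsilon,T]$, so ordinary integration by parts gives
\begin{equation*}
\int_\epsilon^T f'(t)e^{-st}\,dt = f(T)e^{-sT} - f(\epsilon)e^{-s\epsilon} + s\int_\epsilon^T f(t)e^{-st}\,dt .
\end{equation*}
We then let $\epsilon\to 0^+$ and $T\to\infty$ and identify each term.

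As $\epsilon\to0^+$, continuity of $f$ on $[0,\infty)$ gives $f(\epsilon)e^{-s\epsilon}\to f(0)$. The integral $s\int_\epsilon^T f e^{-st}\,dt$ converges to $s\int_0^T f e^{-st}\,dt$, since $f$ is bounded near $0$. So the left-hand side has a limit, which is the (improper at $0$) integral $\int_0^T f'(t)e^{-st}\,dt$. This handles the possible singularity of $f'$ at $t=0$ that is allowed by the hypotheses.

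As $T\to\infty$, the integral $s\int_0^T f e^{-st}\,dt$ tends to $s\El[f](s)$ by assumption. The remaining task is to show $f(T)e^{-sT}\to 0$, and I expect this boundary term to be the main obstacle, since mere convergence of $\int_0^\infty f e^{-st}\,dt$ does not force $f(T)e^{-sT}\to0$ pointwise.

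To handle it, I would first note that the displayed identity shows $\lim_{T\to\infty} f(T)e^{-sT}$ exists if and only if $\int_0^\infty f'e^{-st}\,dt$ converges. In the statement, the existence of $\El[f']$ is implicit in writing $\El[D_tf]$, so the limit $L=\lim_{T\to\infty} f(T)e^{-sT}$ exists. Suppose $L\neq0$. Then $|f(t)e^{-st}|\ge |L|/2$ for all large $t$. Pick a real $s'$ with $\Re s<s'$ such that $\El[f](s')$ still converges; such an $s'$ exists because the region of convergence of a Laplace transform is a right half-plane. Then $|f(t)e^{-s't}|\ge (|L|/2)e^{(\Re s-s')t}$, and I would argue via the half-plane convergence, using abscissa-of-convergence reasoning: convergence at $s'$ implies absolute growth control at any $s$ with larger real part. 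This contradicts a nonzero $L$, so $L=0$.

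Alternatively, and more simply, one can adopt the standard textbook assumption that $f$ is of exponential order $c$ and take $\Re s>c$. Then $|f(T)e^{-sT}|\le Me^{(c-\Re s)T}\to0$ immediately. I would state this as the working hypothesis implicit in ``$f$ has a Laplace transform''. Combining the three limits yields $\El[f'](s)=s\El[f](s)-f(0)$.
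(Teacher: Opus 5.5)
Your route is the paper's route (integration by parts in the defining integral), carried out more carefully. The paper simply writes $[f(t)e^{-st}]_0^\infty = 0 - f(0)$. It silently discards the boundary term at infinity and never addresses that $f'$ may blow up at $t=0$; your truncation to $[\epsilon,T]$ handles both.

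You are also right that an extra hypothesis is genuinely needed. Take a smooth $f$ made of bumps of height $e^{n^2}$ and width $e^{-2n^2}$ centred at $t=n$ ($n\ge 1$). Its Laplace transform converges absolutely for every $s$, yet $f(T)e^{-sT}$ is unbounded, so $\El[f']$ exists nowhere. Reading existence of $\El[f'](s)$ into the statement, or assuming exponential order, is therefore a legitimate repair of something the paper uses tacitly. Your exponential-order argument is correct as written.

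Your first argument for $L=0$, however, does not close. With $s'>\Re s$ the lower bound $(|L|/2)e^{(\Re s-s')t}$ decays and contradicts nothing. The principle you invoke, that convergence at $s'$ yields growth or absolute control at points of larger real part, is false. Convergence controls partial integrals, not pointwise values: the bump example converges absolutely everywhere while $f(t)e^{-st}$ is unbounded. Also, the abscissas of convergence and of absolute convergence can be arbitrarily far apart.

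No auxiliary point is needed. If $f(t)e^{-st}\to L\neq 0$, then $\int_T^{T+1} f(t)e^{-st}\,dt\to L$. This contradicts the Cauchy criterion for the convergent integral $\El[f](s)$ at the same $s$. Hence $L=0$, and your three limits give $\El[f'](s)=s\El[f](s)-f(0)$.
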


\begin{proof}
    This is verified through integration by parts:
    \begin{align*}
    \int_0^\infty (D_t f) e^{-st} dt 
    &= f(t) e^{-st}\bigg|^{\infty}_0 - \int_0^\infty f (t)D_te^{-st} dt 
    \\=& 0 - f(0) -  (-s)\int_0^\infty f(t) e^{-st} dt
    \\&= s \El[f](s)-f(0)
    \end{align*} 
\end{proof}
Repeating the integration $n$ times gives the Laplace transform of the $n$th derivative of the function $f$:
\begin{corollary} \label{thm:LaplaceOfDn}
Let $f$ have a Laplace transform,  $\frac{d^kf}{dt^k}$ be continuous on $[0,\infty)$ for $k=0,\ldots,n-1$,  and $\frac{d^n}{dt^n}f$ be continuous on $(0,\infty)$. Then the Laplace transform of the $n$th derivative of $f$ is given by
\begin{equation}
    \El\left[D^nf\right](s) = s^n\El[f](s) - \sum_{\ell = 0}^{n-1} s^{n-1-\ell}f^{(\ell)}(0). \label{eq:LaplaceOfDn}
\end{equation}\end{corollary}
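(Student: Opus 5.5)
The plan is to prove the Laplace transform formula for the $n$th derivative by induction on $n$, using Theorem \ref{thm:LaplaceOfD} at each step. For $n=1$ the statement is exactly Theorem \ref{thm:LaplaceOfD}, so there is nothing to do.

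For the inductive step, assume the formula holds for $n-1$. Let $f$ satisfy the hypotheses for $n$, and set $g = D^{n-1}f$. Then $g$ is continuous on $[0,\infty)$ and $g' = D^n f$ is continuous on $(0,\infty)$. Applying Theorem \ref{thm:LaplaceOfD} to $g$ gives
\[
\El[D^nf](s) = s\,\El[D^{n-1}f](s) - f^{(n-1)}(0).
\]
The function $f$ also satisfies the hypotheses for $n-1$, since its derivatives up to order $n-2$ are continuous on $[0,\infty)$ and $D^{n-1}f$ is continuous on $(0,\infty)$. The induction hypothesis therefore gives
\[
\El[D^{n-1}f](s) = s^{n-1}\El[f](s) - \sum_{\ell=0}^{n-2} s^{n-2-\ell}f^{(\ell)}(0).
\]
Multiplying by $s$ and subtracting $f^{(n-1)}(0)$, the boundary term $f^{(n-1)}(0)$ becomes the $\ell=n-1$ term $s^{0}f^{(n-1)}(0)$ of the sum. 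This yields exactly $s^n\El[f](s)-\sum_{\ell=0}^{n-1}s^{n-1-\ell}f^{(\ell)}(0)$.

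The main obstacle is checking that each intermediate derivative $D^{k}f$, for $k=1,\dots,n-1$, itself has a Laplace transform. Theorem \ref{thm:LaplaceOfD} needs this existence as a hypothesis when applied to $g$, and the vanishing of the boundary term $g(t)e^{-st}\to 0$ is needed as well. Working in the same setting as the paper (functions of exponential order, with the integral understood for $\Re s$ large), I would handle this as follows. At each stage, the integration-by-parts identity in the proof of Theorem \ref{thm:LaplaceOfD} shows that $\El[D^k f]$ exists whenever $\El[D^{k+1}f]$ does and the boundary term vanishes. Alternatively, I would simply take the existence of the transforms of the derivatives as part of the standing assumption "$f$ has a Laplace transform," consistent with how the paper uses these results. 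Either way, the algebraic content is just the telescoping described above.
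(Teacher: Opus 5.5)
Your induction is the paper's argument made explicit: the paper just says that repeating the integration by parts of Theorem \ref{thm:LaplaceOfD} $n$ times gives the formula, and you carry this out by applying that theorem to $D^{n-1}f$ and absorbing the boundary term as the $\ell=n-1$ summand. On the existence worry: if Theorem \ref{thm:LaplaceOfD} is read as asserting that $\El[f']$ exists, then your induction already propagates existence upward from $\El[f]$. Your ``downward'' inference (from $\El[D^{k+1}f]$ to $\El[D^k f]$) is therefore not the direction you need, and the only genuinely hidden hypothesis is the decay $f^{(k)}(t)e^{-st}\to 0$, which the paper's proof of Theorem \ref{thm:LaplaceOfD} already takes for granted.
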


\begin{theorem}\label{thm:Derivative_in_Laplace_space}
    Let $f$ have a Laplace transform and be continuous on $(0,\infty)$. Then the Laplace transform of $t f(t)$ is given by
    \begin{equation}
        \El[t f(t)] = -\frac{d}{ds} \El[f](s) \label{eq:Derivative_in_Laplace_space}.
    \end{equation}
\end{theorem}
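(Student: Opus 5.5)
The statement to be proved is Theorem \ref{thm:Derivative_in_Laplace_space}: $\El[t f(t)](s) = -\frac{d}{ds}\El[f](s)$. The plan is to differentiate the defining integral (\ref{eq:LaplaceTransform}) under the integral sign with respect to $s$. Formally,
\[
\frac{d}{ds}\int_0^\infty f(t)e^{-st}\,dt=\int_0^\infty f(t)\,\frac{\partial}{\partial s}e^{-st}\,dt=-\int_0^\infty t f(t)e^{-st}\,dt=-\El[tf(t)](s),
\]
which is exactly the claimed identity after multiplying by $-1$. So the whole content of the proof is justifying the interchange of differentiation and integration.

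For the justification, fix a point $s$ with $\Re s > \sigma_0$, where $\sigma_0$ is such that $\int_0^\infty |f(t)|e^{-\sigma_0 t}\,dt<\infty$. Interpreting ``$f$ has a Laplace transform'' as absolute convergence at some real $\sigma_0$ gives such a point. Choose $\delta>0$ with $\Re s - 2\delta > \sigma_0$, and restrict to $s'$ in the disc $|s'-s|<\delta$. On this disc the derivative of the integrand is bounded by
\[
|t f(t)e^{-s't}|\le t e^{-\delta t}\,|f(t)|e^{-\sigma_0 t}\le \frac{1}{e\delta}|f(t)|e^{-\sigma_0 t},
\]
using $\sup_{t\ge0} te^{-\delta t}=1/(e\delta)$. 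The right-hand side is integrable and independent of $s'$.

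Continuity of $f$ on $(0,\infty)$ ensures the integrand is measurable, so the dominated convergence theorem applies to the difference quotients. By the mean value inequality, each difference quotient is bounded by the same integrable majorant. Hence $\El[f]$ is differentiable (indeed holomorphic) at $s$, its derivative is $-\int_0^\infty t f(t)e^{-st}\,dt$, and this integral converges, which shows that $tf(t)$ has a Laplace transform.

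The main obstacle is only this domination step, specifically producing a majorant uniform in $s'$ near $s$. The trick is the $\delta$-margin: it absorbs the polynomial factor $t$ into a small exponential. If the paper intends merely conditional convergence, I would first pass to absolute convergence at a slightly larger real part, using the standard fact that conditional convergence at $s_0$ implies absolute convergence for $\Re s>\Re s_0+\epsilon$. I would then run the same argument there, noting that the identity holds on the open half-plane of convergence.
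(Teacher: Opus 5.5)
Your main argument is the paper's argument. The paper's proof only moves $-\frac{d}{ds}$ inside $\int_0^\infty e^{-st}f(t)\,dt$ without comment, and your $\delta$-margin majorant $\frac{1}{e\delta}|f(t)|e^{-\sigma_0 t}$ with dominated convergence supplies exactly the justification it omits. Under the reading $f(t)e^{-\sigma_0 t}\in L^1(0,\infty)$ your proof is complete. Restricting to the open half-plane $\Re s>\sigma_0$ is also a necessary correction of the statement, not a loss: for $f(t)=1/(1+t^2)$ the transform converges absolutely at $s=0$, yet $\int_0^\infty t/(1+t^2)\,dt$ diverges, so the identity fails at the boundary.

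The fallback for conditional convergence, however, rests on a false claim. For Laplace integrals, convergence at $s_0$ does not imply absolute convergence for $\Re s>\Re s_0+\epsilon$; the abscissae of convergence and of absolute convergence can differ by any amount, even infinitely. For example, $f(t)=e^{e^t}\sin(e^{e^t})$ is continuous, and the substitution $u=e^{e^t}$ gives $\El[f](s)=\int_e^\infty \sin u\,(\log u)^{-(s+1)}\,du$. This converges for $\Re s>-1$ but converges absolutely for no $s$. The paper's definition of the transform only requires the integral to converge, so such $f$ fall within the theorem, and your proof does not reach them.

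The standard repair reuses your domination argument on a different integral. Let $\phi(t)=\int_0^t f(\tau)e^{-s_0\tau}\,d\tau$, which is bounded. For $\Re s>\Re s_0$, integration by parts gives $\El[f](s)=(s-s_0)\int_0^\infty \phi(t)e^{-(s-s_0)t}\,dt$, an absolutely convergent integral that you may differentiate under the integral sign. Integrating $\int_0^T t f(t)e^{-st}\,dt$ by parts in the same way, the boundary term $T\phi(T)e^{-(s-s_0)T}$ tends to $0$. This shows that $\El[tf](s)$ exists and equals $-\int_0^\infty \phi(t)\bigl(1-(s-s_0)t\bigr)e^{-(s-s_0)t}\,dt$, which is precisely $-\frac{d}{ds}\El[f](s)$.
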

\begin{proof}
    Using the definition of the Laplace transform, we have \begin{align*}
        -\frac{d}{ds} \El[f](s) =&  -\frac{d}{ds}\int_0^\infty e^{-st} f(t) dt
        \\
        =&  \int_0^\infty e^{-st}t f(t) dt.
    \end{align*}
\end{proof}

The convolution operation used throughout this paper is the 'causal' convolution (as opposed to the convolution associated with Fourier analysis, which integrates from $-\infty$ to $\infty$).
\begin{definition}
    \label{def:CausalConvolution}
\begin{equation}
    (f*g)(t) :=\int_0^t f(t-\tau) g(\tau) d\tau \label{eq:CausalConvolution}
\end{equation}
\end{definition}

 This operation is commutative, which can be seen by a change of coordinates $\tau \rightarrow t-\tau$.
The fundamental property of the causal convolutions is that they are associated with products in the Laplace domain.
 \begin{theorem}
     Let $f(t)$ and $g(t)$ have Laplace transforms. Then the Laplace transform of $f*g$ is given by
\begin{equation}
    \El[f*g](s) = \El[f](s) \El[g](s) .\label{eq:ConvolutionTheorem}
\end{equation}

\label{thm:ConvolutionTheorem}
 \end{theorem}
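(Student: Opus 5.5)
The plan is to verify the Laplace transform identity for the causal convolution directly from the definitions: write $\El[f](s)\El[g](s)$ as a double integral and change variables. Assume, as the statement implicitly does, that $s$ lies in a half-plane where both transforms converge absolutely. Write
\[
\El[f](s)\El[g](s) = \int_0^\infty\!\!\int_0^\infty f(u)g(\tau)e^{-s(u+\tau)}\,du\,d\tau .
\]
Then substitute $t=u+\tau$ with $\tau$ held fixed, so that $u=t-\tau$ and $du=dt$. This turns the region $\{u\ge0,\tau\ge0\}$ into $\{0\le\tau\le t<\infty\}$.

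Next, exchange the order of integration and integrate over $\tau$ first, for fixed $t$. The right-hand side then becomes
\[
\int_0^\infty e^{-st}\left(\int_0^t f(t-\tau)g(\tau)\,d\tau\right)dt .
\]
By Definition \ref{def:CausalConvolution}, the inner integral is $(f*g)(t)$, so the whole expression is exactly $\El[f*g](s)$. This proves \eqref{eq:ConvolutionTheorem}.

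The main obstacle is justifying the Fubini/Tonelli interchange. I would handle it by replacing $s$ with $\Re s$ and $f,g$ with $|f|,|g|$. In that setting Tonelli's theorem applies to nonnegative integrands, and the double integral equals $\El[|f|](\Re s)\,\El[|g|](\Re s)<\infty$. This absolute integrability justifies the exchange for complex $s$. As a byproduct, it shows that $\El[f*g]$ converges absolutely in the same half-plane, and that $f*g$ is defined for almost every $t$.

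For the paper's uses this hypothesis is harmless. The functions involved, namely powers $t^{\nu-1}$ and the $P/R$ functions, are locally integrable and of at most exponential growth, so their transforms converge absolutely for $\Re s$ large enough.
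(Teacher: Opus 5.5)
Your proof is correct and is the same argument as the paper's: write $\El[f](s)\El[g](s)$ as a double integral over the quadrant, substitute $\xi=t+t'$, and recognize the inner integral over $0\le t'\le\xi$ as $(f*g)(\xi)$. The only difference is that you justify the interchange of integrals by Tonelli--Fubini, assuming both transforms converge absolutely (a standard, slightly stronger reading of the hypothesis), whereas the paper changes the variables and limits without comment.
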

\begin{proof}
    This follows from a change of coordinates:
\begin{align*}
    \El[f](s)\El[g](s) =& \int_0^\infty  f(t) e^{-st} dt \int_0^\infty   g(t') e^{-st'} dt'
    \\
    =& \int_0^\infty \int_0^\infty e^{-s(t+t')}f(t) g( t' ) dt dt'.
\end{align*}
Take $\xi = t+t'$. 
Changing the integration variables together with the limits of integration  
leads to 
\begin{align*}
    \El[f](s)\El[g](s) =& \int_0^\infty e^{-s\xi} \left(\int_0^\xi f(\xi-t')g(t')dt'\right)d\xi
    \\=&\El[f*g](s).
\end{align*}

\end{proof}
Finally, the paper relies on the Laplace transform of fractional powers of $t$:
\begin{lemma}
\label{lemma:LaplaceOfPower}
    For $a>-1$, the Laplace transform of $t^a$ is given by
    \begin{equation}
        \El[t^a](s) =s^{-1-a}
\Gamma(a+1).  \label{eq:LaplaceOfPower}  \end{equation}
\end{lemma}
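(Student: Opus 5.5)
The plan is to reduce the computation to an elementary integral. By definition, $\El[t^a](s)=\int_0^\infty t^a e^{-st}\,dt$. For real $s>0$, I would substitute $u=st$, so $t=u/s$ and $dt=du/s$. The integral becomes
\begin{equation*}
\int_0^\infty \left(\frac{u}{s}\right)^a e^{-u}\,\frac{du}{s} = s^{-1-a}\int_0^\infty u^{a}e^{-u}\,du = s^{-1-a}\,\Gamma(a+1),
\end{equation*}
where the last step is Euler's integral definition of the Gamma function.

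The hypothesis $a>-1$ enters only through convergence, and I would check it first. Near $t=0$ the integrand behaves like $t^a$, which is integrable exactly when $a>-1$. At infinity the factor $e^{-st}$ dominates any power of $t$ whenever $\Re s>0$. So the integral converges absolutely on the half-plane $\Re s>0$, and the Gamma integral $\int_0^\infty u^a e^{-u}\,du$ is finite for the same reason.

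The main (mild) obstacle is extending the formula from real $s>0$ to complex $s$ with $\Re s>0$, which is the range used in the proof of Lemma~\ref{lemma:Laplace_of_R_P}. For complex $s$ the substitution $u=st$ turns the real half-line into a ray in the complex plane. I would avoid contour arguments and use analytic continuation instead.
\begin{itemize}
\item The left-hand side $\int_0^\infty t^a e^{-st}\,dt$ is holomorphic in $\Re s>0$. This follows by differentiating under the integral sign, justified by dominated convergence on the sets $\Re s\ge\delta>0$.
\item The right-hand side $\Gamma(a+1)s^{-1-a}$ is holomorphic there as well, taking the principal branch of $s^{-1-a}$.
\item The two functions agree on the positive real axis, so by the identity theorem they agree throughout $\Re s>0$.
\end{itemize}
This completes the argument.
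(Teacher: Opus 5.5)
Your proposal is correct and follows the paper's proof: you substitute $u=st$ to reduce the integral to $s^{-1-a}\int_0^\infty u^a e^{-u}\,du=s^{-1-a}\Gamma(a+1)$, and you use $a>-1$ only for convergence at $t=0$. The analytic-continuation step is a sound addition the paper omits: you show both sides are holomorphic on $\Re s>0$, take the principal branch of $s^{-1-a}$, and apply the identity theorem. The paper carries out the substitution only implicitly for real $s>0$, even though complex $s$ with $\Re s>0$ appears in the proof of Lemma~\ref{lemma:Laplace_of_R_P}.
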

\begin{proof}
    By definition \ref{def:LaplaceTransform},
    \begin{align*}
        \El[t^a] = \int_0^\infty t^a e^{-st}dt.
    \end{align*}
    Change coordinates from $t$ to $u= st$. Thus $dt = s^{-1} du$, and $t^a = u^as^{-a}$, and so the Laplace tranform of $t^a$ is
    \begin{equation}
       \El[t^a] =  s^{-1-a}\int_0^\infty  u^a e^{-u}du .
    \end{equation}
    Because $a>-1$ the integral converges and equals $\Gamma(a+1)$, by definition of the $
    \Gamma$ function. 
    In the case that $a\leq-1$, the integral fails to converge.
\end{proof}


\section*{\small Acknowledgments}
{\small  We gratefully acknowledge support from the Division of Mathematical 
Sciences at the US National Science Foundation (NSF) through Grants 
DMS-2111117  
and DMS-2136198.  
 }

 \section*{\small
 Conflict of interest} 

 {\small
 The authors declare no conflict of interest.}

\bigskip  

\small 

\end{document}